\newtheorem{Def}{Definition}
\newtheorem{Thm}{Theorem}
\newtheorem{Prop}{Proposition}
\newtheorem{Lem}{Lemma}
\newtheorem{Cor}{Corollary}
\newtheorem{Ex}{Example}
\newtheorem{Rem}{Remark}
\theoremstyle{plain}
\newtheorem{introtheorem}{Theorem}
\DeclareMathOperator{\uE}{\underline{E}}
\newcommand{\N }{\mathbb N}
\newcommand{\Z }{\mathbb Z}
\newcommand{\R }{\mathbb R}
\newcommand{\C}{\mathbb{C}}
\newcommand{\ra }{\rightarrow}
\title{\emph{K}-homology and \emph{K}-theory for the lamplighter groups of finite groups}
\author{Ram\'{o}n FLORES, Sanaz POOYA and Alain VALETTE}
\begin{document}

\maketitle

\baselineskip=16pt

\unmarkedfntext{Keywords : \emph{K}-theory, $C^*$-algebra, proper actions, Baum-Connes conjecture. MSC classification: 46L80, 55R40}
\unmarkedfntext{The first author was partially supported by the project MTM2010-20692, Ministerio de Ciencia e Innovaci\'on, Spain. The paper was completed as the third author was in a residence at MSRI (Berkeley, CA) as a Simons research professor during the Fall 2016 semester; he acknowledges support from grant DMS-1440140 of the National Science Foundation.}

\begin{abstract} Let $F$ be a finite group. We consider the lamplighter group $L=F\wr\Z$ over $F$. We prove that $L$ has a classifying space for proper actions $\uE L$ which is a complex of dimension two. We use this to give an explicit proof of the Baum-Connes conjecture (without coefficients), that states that the assembly map $\mu_i^L:K_i^L(\uE L)\rightarrow K_i(C^*L)\;(i=0,1)$ is an isomorphism. Actually, $K_0(C^*L)$ is free abelian of countable rank, with an explicit basis consisting of projections in $C^*L$, while $K_1(C^*L)$ is infinite cyclic, generated by the unitary of $C^*L$ implementing the shift. 
Finally we show that, for $F$ abelian, the $C^*$-algebra $C^*L$ is completely characterized by $|F|$ up to isomorphism.
\end{abstract}

\section{Introduction}
The Baum-Connes conjecture for a discrete group $G$ states that the assembly map
$$ \mu_i^G : K_i^G(\uE G)\ra K_i(C_{\mathrm r}^*(G)),\,\,\,\,\, i=0, 1$$ is an isomorphism between two abelian groups: on the left hand side the  $G$-equivariant \emph{K}-homology of $\uE G$, that will be referred as the topological side; and on the right hand side the \emph{K}-theory of $C_{\mathrm r}^*(G)$, to which we refer as the analytical side (see \cite{BCH94}).

Although this conjecture is still open in some cases (including the paradigmatic instance $SL(3, \Z)$), it is proved for a large class of groups including a-T-menable groups (in particular amenable groups), by prominent work of Higson-Kasparov in \cite{HK97}. Naturally it is of strong interest to add more example of groups to this list, but this is not the goal of this article. In contrast, we focus on one specific class of amenable groups and compute their \emph{K}-theory directly, avoiding in particular the use of the Dirac-dual Dirac method or even \emph{KK}-theory at all.\\
Our motivation is to understand better the \emph{K}-groups of these groups and the way they are identified by the assembly map.
Moreover, we go beyond the fact that $\mu_i^G:K_i^G(\uE G)\rightarrow K_i(C^*(G))$ is abstractly an isomorphism (as provided by the Higson-Kasparov approach) and present explicit sets of generators which are identified by the assembly map.

In this article we study the Baum-Connes conjecture for the class of lamplighter groups $L= F \wr \Z =( \oplus_{\Z}F) \rtimes_{\alpha} \Z$, with $F$ a (non-trivial) finite group and $\alpha$ the shift.
For the case of classical lamplighter groups  $L = (\Z /{{n\Z}}) \wr \Z$ computations for the \emph{K}-theory of  $C^*L$  have implicitly appeared in different contexts, see for instance (\cite{AE15}, Example 6.10), (\cite{Bel92}, Theorem 15), (\cite{CM04}, Theorem 4.12), (\cite{CEL13}, Section 6.5) or (\cite{Ohh15}, Example 3). Our approach is different from the previous ones in two ways: first, we treat lamplighter groups of finite groups in full generality; and second, we specify generators for the \emph{K}-groups and show their relevance in the context of Baum-Connes.

Let $ B=\oplus_{\Z}F$, denote by
Min F the set of equivalence classes of minimal projections in the group ring $\C F$, and denote by $\hat F $ the set of equivalence classes of irreducible unitary representations of $F$. We summarise now the result of our calculations in Proposition \ref{topolside} and Theorem \ref{K-theory for $C^*L$}:
\begin{itemize}
	
	\item $K_0^L(\uE L)$ is a free abelian group of countable rank, and its basis is indexed by the orbits of the shift $\Z \curvearrowright \hat F^{(\Z)}$.
	\item $K_0(C^*L)$ is a free abelian group of countable rank, and its basis is indexed by the orbits of the shift $\Z \curvearrowright {Min F^{(\Z)}}$.
	\item $K_1^L(\uE L)$ is an infinite cyclic group; actually the inclusion $\Z\rightarrow L$ induces an isomorphism $\Z=K_1^\Z(\uE \Z)\rightarrow K_1^L(\uE L)$.
	\item $K_1(C^*L)$ is an infinite cyclic group generated by the $K_1$-class of the unitary $u$ which generates the shift. 	
\end{itemize}
Putting all these facts together, we deduce the Baum-Connes isomorphism as a result of our \emph{K}-theoretic computations (Theorem \ref{BCforL}):

\begin{introtheorem}\label{BClamp}
	Let $L=F\wr\Z$ be the lamplighter group over some (non-trivial) finite group $F$. The map $\mu_0^L:K_0^L(\uE L)\ra K_0(C^*L)$ is an isomorphism between two free abelian groups of countably infinite rank. The map $\mu_1^L:K_1^L(\uE L)\ra K_1(C^*L)$ is an isomorphism between two infinite cyclic groups.
\end{introtheorem}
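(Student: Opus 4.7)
The plan is to reduce the theorem to matching the four explicit descriptions of generators already stated, and then to invoke naturality of the assembly map to identify the two sides.

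First I would dispose of the $K_1$ case, which is essentially formal given the third and fourth bullets above. The inclusion $\iota:\Z\hookrightarrow L$ is equivariant for the assembly maps, giving a commutative square
\[
\begin{CD}
K_1^\Z(\uE\Z) @>\mu_1^\Z>> K_1(C^*\Z)\\
@V\iota_*VV @VV\iota_*V \\
K_1^L(\uE L) @>\mu_1^L>> K_1(C^*L).
\end{CD}
\]
The left vertical map is an isomorphism $\Z\xrightarrow{\cong}\Z$ by the third bullet. The right vertical map sends the generator $[z]\in K_1(C^*\Z)$ of the cyclic group $K_1(C^*\Z)\cong\Z$ to $[u]\in K_1(C^*L)$, which is the generator by the fourth bullet, so it is also an isomorphism of infinite cyclic groups. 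Finally, Baum--Connes for $\Z$ is classical (Pimsner--Voiculescu, or Higson--Kasparov applied to an abelian group), so $\mu_1^\Z$ is an isomorphism. Chasing the square gives that $\mu_1^L$ is an isomorphism.

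For the $K_0$ case, the first two bullets tell us that both groups are free abelian with bases indexed by the $\Z$-orbits on $\hat F^{(\Z)}$ and $\mathrm{Min}\, F^{(\Z)}$ respectively. I would begin by recalling the standard bijection $\hat F\leftrightarrow \mathrm{Min}\, F$ (the Artin--Wedderburn decomposition of $\C F$: each irreducible representation $\pi$ corresponds to a unique simple summand, whose minimal projections form a single equivalence class). Extending coordinate-wise to the restricted direct sums, which amounts to sending the trivial representation to the symmetrizing projection, one obtains a $\Z$-equivariant bijection $\hat F^{(\Z)}\leftrightarrow \mathrm{Min}\, F^{(\Z)}$ and hence a bijection of orbit sets. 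Consequently the two bases on the $K_0$-side have the same indexing set, so $K_0^L(\uE L)$ and $K_0(C^*L)$ are abstractly isomorphic as free abelian groups of countable rank.

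It remains to verify that the assembly map $\mu_0^L$ identifies these two chosen bases (up to signs, which do not affect being an isomorphism). Here I would exploit the fact that every finite subgroup of $L$ lies in the base $B=\oplus_\Z F$, and that a basis element of $K_0^L(\uE L)$ indexed by an orbit of $\underline{\pi}\in\hat F^{(\Z)}$ is represented by the class of $\underline{\pi}$ itself, viewed through the Green--Julg identification $K_0^H(\mathrm{pt})\cong R(H)$ for a finite subgroup $H\supseteq\mathrm{supp}(\underline{\pi})$ of $B$. By naturality of assembly with respect to the inclusion $H\hookrightarrow L$, $\mu_0^L$ sends this class to the $K_0(C^*L)$-class of the induced projection in $\C H\subset \C B\subset C^*L$, which is exactly the minimal projection $\underline{p}$ corresponding to $\underline{\pi}$ under the bijection above. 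I expect this last step, the bookkeeping that the topological generator truly maps to the prescribed analytical generator via Green--Julg and induction, to be the main obstacle: it requires making the isomorphism $K_0^L(\uE L)\cong\bigoplus_{\mathrm{orbits}}\Z$ explicit at the level of cycles and checking compatibility with the description of $K_0(C^*L)$ in terms of projections in the group algebra.
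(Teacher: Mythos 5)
Your $K_1$ argument is essentially the paper's commutative square run backwards, but it contains a circularity: the assertion that $i_*:K_1^\Z(\uE\Z)\to K_1^L(\uE L)$ is an isomorphism (your ``third bullet'') is not delivered by Proposition \ref{topolside}, which only shows that $K_1^L(\uE L)\cong\Z$ is generated by the class of the base-point of $\hat F^{(\Z)}$. In the paper the isomorphism statement for $i_*$ on the topological side is a \emph{conclusion} of the proof: one writes $i_*(t)=k\cdot u$ with $k\neq 0$ (because $p_*\circ i_*=\mathrm{id}$ for the projection $p:L\to\Z$), observes that $k\,\mu_1^L(u)=i_*(\mu_1^\Z(t))$ is a generator of $K_1(C^*L)$ by Theorem \ref{K-theory for $C^*L$}, and concludes $k=\pm1$, which makes both $i_*$ and $\mu_1^L$ isomorphisms. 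As written, you are assuming the one non-formal input of the $K_1$ case rather than proving it; either reproduce the $k=\pm1$ argument or verify directly on the $E_2$-page that the inclusion $\Z\to L$ carries the generator of $H_1(\Z;\Z)$ to the base-point generator of the invariants $(\Z(\hat F^{(\Z)}))^\Z$.

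For $K_0$ the gap is more substantial. The paragraph matching $\hat F^{(\Z)}$ with $\mathrm{Min}(F)^{(\Z)}$ only shows the two groups are \emph{abstractly} isomorphic, which says nothing about $\mu_0^L$; the entire content is in the ``bookkeeping'' step that you flag as the main obstacle and do not carry out. The paper avoids this by exploiting the inclusion of the base group $B$ rather than of individual finite subgroups: Proposition \ref{topolside} and Theorem \ref{K-theory for $C^*L$} exhibit both sides as cokernels of $\mathrm{Id}-\alpha_*$, giving two exact rows
$$K_0^B(\uE B)\stackrel{\mathrm{Id}-\alpha_*}{\longrightarrow}K_0^B(\uE B)\stackrel{\iota_*}{\longrightarrow}K_0^L(\uE L)\to 0,\qquad K_0(C^*B)\stackrel{\mathrm{Id}-\alpha_*}{\longrightarrow}K_0(C^*B)\stackrel{\iota_*}{\longrightarrow}K_0(C^*L)\to 0,$$
joined by $\mu_0^B$ and $\mu_0^L$ into a diagram commuting by functoriality of the assembly map (Corollary II.1.3 in \cite{MV03}); since $\mu_0^B$ is an isomorphism (Baum--Connes for locally finite groups, Section \ref{K-locfin}), the Five Lemma gives the result with no tracking of cycles at all. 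If you prefer to keep the generator-tracking philosophy, note that the compatibility you need is already recorded in Corollary \ref{K0locfin}(3) at the level of $B$ ($\mu_0^B$ sends $\underline{\pi}$ to the class of the corresponding tensor product of minimal projections), and Lemma \ref{repres} identifies the images under $\iota_*$ of the orbit representatives as bases on both sides; passing through $B$ in this way is both shorter and avoids the Green--Julg identifications for each finite $H$ separately.
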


On page 21 of their seminal paper \cite{BC00}, P. Baum and A. Connes formulated the {\it trace conjecture}. It says that for a group $G$ the image of $\tau _* : K_0(C_{\mathrm r}^*(G))\rightarrow \R$ is the {\it subgroup} of $\mathbb{Q}$ generated by all numbers $\frac{1}{|H|}$, where $H\subset G$ runs though all finite subgroups of G. After counterexamples to the trace conjecture were found (see \cite{Roy99}), W. L\"uck formulated in \cite{Luc02} the {\it modified trace conjecture}: the image of $\tau _*$ should be contained in the {\it subring} of $\mathbb{Q}$ generated by the same $\frac{1}{|H|}$'s; he proved (Theorem 0.3 in \cite{Luc02}) that this follows from surjectivity of the the assembly map. For our lamplighter groups $L=F\wr\Z$, this gives immediately $\operatorname{Im} \tau_* = \Z [\frac{1}{|F|}]$. In Proposition \ref{trace} we give a direct proof of this fact, based on our K-theory computations.


It follows from Theorem \ref{BClamp} that the \emph{K}-theory of $C^*L$ does {\it not} depend on the finite group $F$; i.e., varying the group $F$, the corresponding $C^*$-algebras $C^*L$ cannot be distinguished by $K$-theory. It seems a natural question to classify the $C^*L$'s up to $*$-isomorphism. We have only partial results, which however yield a complete classification when $F$ is abelian.

\begin{introtheorem}\label{classify} Let $F_1,F_2$ be finite groups, with $F_1$ abelian. The following are equivalent:
\begin{enumerate}
\item $C^*(F_1\wr\Z)\simeq C^*(F_2\wr\Z)$;
\item $F_2$ is abelian and $|F_1|=|F_2|$.
\end{enumerate}
\end{introtheorem}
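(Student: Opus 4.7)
For abelian $F$, Pontryagin duality identifies $C^*(\oplus_\Z F)$ with $C(\prod_\Z\hat F)$, and the shift action of $\Z$ on $\oplus_\Z F$ dualizes to the coordinate shift on $\prod_\Z\hat F$. Hence $C^*(F\wr\Z)\cong C(\prod_\Z\hat F)\rtimes\Z$, the crossed product of a Cantor space by a full Bernoulli shift on $n=|F|$ symbols. If $F_1,F_2$ are both abelian with $|F_1|=|F_2|=n$, then $\hat F_1$ and $\hat F_2$ are both $n$-point discrete sets; any bijection $\hat F_1\to\hat F_2$ extends coordinatewise to a shift-equivariant homeomorphism $\prod_\Z\hat F_1\to\prod_\Z\hat F_2$, yielding the required $*$-isomorphism.

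\textbf{Direction (1) $\Rightarrow$ (2).} Suppose $C^*L_1\cong C^*L_2$ with $F_1$ abelian. The first step is to extract $|F_1^{ab}|=|F_2^{ab}|$ from the maximal commutative quotient: since $L^{ab}=F^{ab}\oplus\Z$, the quotient of $C^*L$ by its closed commutator ideal is $C^*(L^{ab})\cong C(\widehat{F^{ab}}\times\mathbb{T})$, topologically $|F^{ab}|$ disjoint circles. This quotient is an isomorphism invariant, so the number of connected components of the spectrum matches on both sides. Using that $F_1$ is abelian, we obtain $|F_1|=|F_2^{ab}|$, and in particular $|F_1|$ divides $|F_2|$. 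Note that the theorem's full conclusion ``$|F_1|=|F_2|$ and $F_2$ abelian'' is equivalent to the single equality $|F_1|=|F_2|$, since then $|F_2|=|F_2^{ab}|$ forces $F_2=F_2^{ab}$; thus it remains to prove $|F_2|\leq|F_1|$.

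\textbf{Main obstacle.} The delicate step is the reverse inequality. My plan is to exploit the fact that $C^*L_1\cong C(\prod_\Z\hat F_1)\rtimes\Z$ carries a canonical commutative Cartan subalgebra $C(\prod_\Z\hat F_1)$: it is MASA (the Bernoulli shift is topologically free, its periodic points being nowhere dense), it is regular (together with the shift unitary it generates $C^*L_1$), and it admits a faithful conditional expectation. Transferring this Cartan via the $*$-isomorphism would yield a commutative Cartan subalgebra of $C^*L_2$, and then Renault's groupoid reconstruction theorem would identify the underlying Cantor dynamical system. Since the full Bernoulli shift on $n^\Z$ has topological entropy $\log n$, recovering the groupoid recovers $n=|F_1|$; comparing with the analogous description of $C^*L_2$ forces $|F_2|=n$ and hence $F_2$ abelian. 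The main technical hurdle is that a priori an abstract $*$-isomorphism need not send Cartan to Cartan, so one must establish a rigidity statement --- uniqueness (up to unitary or approximately inner conjugation) of the commutative Cartan subalgebra in $C^*L_1$ --- in order for the strategy to yield the desired identification.
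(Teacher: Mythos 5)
Your direction $(2)\Rightarrow(1)$ is correct and essentially the paper's argument (the paper phrases it as $\C F_1\simeq\C F_2$ for abelian groups of equal order, which then propagates to the crossed products). Your first step of $(1)\Rightarrow(2)$, recovering $|F_1|=|F_2^{ab}|$ from the abelianization $(C^*L)^{ab}\simeq\C^{|F^{ab}|}\otimes C(S^1)$, is also exactly the paper's Lemma on abelianizations, and your reduction of the remaining task to showing that $F_2$ is abelian is sound.

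However, the step you label ``Main obstacle'' is a genuine gap, and you have correctly diagnosed where it lies: your strategy requires that an abstract $*$-isomorphism $C^*L_1\simeq C^*L_2$ carry the canonical Cartan subalgebra $C(\prod_\Z\hat F_1)$ to a Cartan subalgebra of $C^*L_2$, i.e.\ a uniqueness-of-Cartan rigidity statement that you do not prove and that is not available off the shelf for these (non-simple, non-minimal) crossed products. Even granting such rigidity, Renault's reconstruction would only identify the two transformation groupoids up to isomorphism, which amounts to continuous orbit equivalence of the underlying systems, and topological entropy is not in general an invariant of orbit equivalence, so the final appeal to $\log n$ is also shaky. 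The paper avoids all of this with a purely $C^*$-algebraic invariant: residual finite-dimensionality. One shows that $C^*(L_F)$ is residually finite-dimensional if and only if $F$ is abelian --- the forward direction because $L_F$ is then residually finite and amenable (Bekka--Louvet), and the converse because RFD forces $L_F$ to be maximally almost periodic, hence (via Selberg's lemma applied to the images of finite-dimensional representations) residually finite, and Gruenberg's theorem on residual finiteness of wreath products then forces $F$ abelian. Since RFD is manifestly a $*$-isomorphism invariant and $C^*(L_{F_1})$ is RFD, this yields $F_2$ abelian at once, and your abelianization count $|F_1|=|F_2^{ab}|=|F_2|$ finishes the proof. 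You should replace the Cartan/entropy plan by an argument of this kind.
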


Our paper is organized as follows.

Sections 2 and 3 are preliminaries. Section 2 is about Bredon homology, which is useful in computing $K^G_*(\uE G)$. In particular, since the semi-direct product decomposition $L = B \rtimes_{\alpha} \Z$ of the lamplighter groups is the key to the computations of this article, we explain the Mart\'{i}nez spectral sequence \cite{Mar02} that enables to compute Bredon homology of extensions in a similar way Lyndon-Hochschild-Serre sequence computes ordinary homology; Section 3 is about invariants and co-invariants of group actions on free abelian groups, that turn out to be useful in subsequent computations. Section 4 provides a verification of the Baum-Connes conjecture for the locally finite group $B$. The conjecture is inherited by co-limits of groups, so it holds for abstract reasons for $B$, but for later use we need explicit computations, based on the existence of a 1-dimensional model (i.e. a tree) for $\uE B$.

Section 5 is the core of the paper. Building on a construction by Fluch \cite{Flu11}, we obtain an explicit two-dimensional model for $\uE L$. Because of dimension 2, the Mart\'{i}nez spectral sequence degenerates on the $E_2$-page and allows for a detailed computation of the topological side of Baum-Connes. On the analytical side,  the crossed product decomposition $C^*L=C^* B \rtimes_{\alpha} \Z$ allows to appeal to the Pimsner-Voiculescu 6-term exact sequence \cite{PV80} as the main tool for computations.  Having explicit sets of generators on both sides we can prove Theorem \ref{BClamp} and check the trace conjecture.




In section 6, we present some applications of our results to topological dynamics. If we allow the finite group $F$ in $L = F \wr \Z$ to be abelian then we get a well-known dynamical system $(X, \Z , \alpha)$, with $X = \hat B = \prod \hat F $ a Cantor set and  $\alpha $ the full shift.  We present a very explicit basis for $K_0(C(X)\rtimes_{\alpha} \Z)$ (which was known to be a free abelian group of countable rank, see \cite{PT82}). In addition to this, we give a simple algebraic proof for a known theorem of Liv\u{s}ic \cite{Liv72} characterizing $\alpha$-coboundaries in $C(X,\Z)$.

Finally in section 7 we prove Theorem \ref{classify} and show that, for every finite group $F$, the $C^*$-algebra $C^*L$ has stable rank 2.

\medskip
{\bf Acknowledgements}: We thank I. Chatterji, D. Kerr, W. L\"uck, N.C. Phillips and D. Zagier for some useful conversations or exchanges, and F. Duchene for her help with the pictures.

\section{Background on the left hand side of Baum-Connes}

\subsection{Classifying space for proper actions}

Here we discuss classifying spaces for families, which is the main object of study of the left-hand side of Baum-Connes conjecture. We provide a mild introduction, the interested reader is referred to the excellent survey \cite{Luc05}.



Let $G$ be a discrete group. A topological space $X$ is a {\it $G$-space} if $G$ acts by homeomorphisms on $X$.

\begin{Def}

Let $G$ be a group, $H\leq G$ a subgroup, $X$ an $H$-space. Consider the action of $H$ over the product $G\times X$ given by $h(g,x)=(gh^{-1},hx)$. The \emph{induced $G$-space} $G\times_H X$ is  defined as the orbit space $(G\times X)/H$ under the previous $H$-action, endowed in turn with the action of $G$ given by $g[g',x]=[gg',x]$. It is easy to see that this action is well-defined.

\end{Def}

Now we are ready to define the concept of proper action:

\begin{Def}

An action of a discrete group $G$ on a $G$-space $X$ is called \emph
{proper} is there are finite subgroups $G_i<G$ and open subspaces $X_i\subset X$ with $X_i$ invariant under the action of $G_i$, such that the $G$-maps $G\times_{G_i}X_i\hookrightarrow X$ are embeddings, and $X=\bigcup_iG\times_{G_i}X_i$.
\end{Def}

It is easy to see that the isotropy groups of a proper $G$-action are finite subgroups of $G$.

\begin{Def}

A \emph{$G$-CW-complex} is a $G$-space endowed with a filtration by closed subspaces $X^0\subset X^1\subset \ldots \subset X$, with $X_0$ discrete, $X=\bigcup X^i$ for every $i$, and such that:

\begin{itemize}

\item A subspace of $X$ is closed if and only if its intersection with every element of the filtration is so.

\item Every $X^i$ is build from $X^{i-1}$ using a push-out

$$\xymatrix{S^{n-1}\times \Delta_n \ar[r] \ar[d] & X^{n-1} \ar[d]\\
B^{n}\times \Delta_n \ar[r] & X^n}$$ with $\Delta_i$ a discrete $G$-space for every $i$. A $G$-CW-complex is said $n$-dimensional if $X^n=X$ for a certain $n$.

\end{itemize}

\end{Def}

It is known that a $G$-CW complex is proper if and only if all the isotropy groups are finite.

Let now $\mathfrak{F}$ be a family of subgroups of a group $G$ that is closed under passing to subgroups and conjugation.

\begin{Def}

A \emph{classifying space for the family $\mathfrak{F}$} is a $G$-CW-complex $E_{\mathfrak{F}}G$ such that $X^H$ is contractible for every $H\in\mathfrak{F}$ and empty if $H\notin\mathfrak{F}$. If $\mathfrak{F}$ is the family of finite subgroups of $\mathfrak{F}$, $E_{\mathfrak{F}}G$ is usually denoted $\underline{E}G$, and called the \emph{classifying space for proper actions of $G$}.
\end{Def}

There is a universal construction that always guarantees the existence of a model for $E\mathfrak{F}$, and this classifying space is always unique up to $G$-homotopy equivalence (\cite{BCH94}, 1.6-1.7.)

\subsection{Bredon homology}

\subsubsection{Generalities}

In this section we introduce the main concepts concerning Bredon homology. More details can be found, for example, in \cite{MV03}, part I.

As it happens in the ordinary case, Bredon homology of a group is defined in terms of projective resolutions, that can be constructed algebraically or by using a cellular chain complex. We will recall briefly the necessary definitions for the second approach, while the reader interested in a more algebraic treatment is referred to \cite{MV03}, pages 8-9.

\begin{Def}

Let $G$ be a discrete group, $\mathfrak{F}$ a collection of subgroups of $G$. The \emph{orbit category} $\mathfrak{O}_{\mathfrak{F}}G$ is the category whose objects are the left cosets $G/H$ with $H\in\mathfrak{F}$, and whose morphisms are the $G$-maps $G/H\rightarrow G/K$. The category of contravariant (resp. covariant) functors from $\mathfrak{O}_{\mathfrak{F}}G$ to abelian groups is denoted by $Mod_{\mathfrak{F}G}$ (resp. $G$-$Mod_{\mathfrak{F}}$), and its objects are called $\mathfrak{O}_{\mathfrak{F}}G$-modules. A $\mathfrak{O}_{\mathfrak{F}}G$-module $P$ is called \emph{projective} is the functor $Mor(P,-)$ from $Mod_{\mathfrak{F}G}$ to abelian groups is exact.

\end{Def}

As the category $\mathfrak{O}_{\mathfrak{F}}G$ is abelian, projective resolutions can be defined, and it can be seen that the category has enough projectives. In order to construct homology groups, we should be able to construct such resolutions in a systematic way. As the nature of our work is topological, we will first construct projectives associated to $G$-spaces, and then we will use them to build projective resolutions of $G$.

Let us define now the cellular complex. Let $X$ be a $G$-CW-complex, and let $\mathfrak{F}$ be a family of subgroups of $G$ that contains the isotropy groups of the action. Let $\Delta_i$, for $i\geq 0$, be the discrete spaces that appear in the definition of $X$ as a $G$-pushout. Then for every $H\in\mathfrak{F}$, we define $C_i(X^H)=\mathbb{Z}[\Delta_i^H]$, the free abelian group with base the fixed-point set $\Delta_i^H$. Now we may define a
 $\mathfrak{O}_{\mathfrak{F}}G$-module $\underline{C_i(X)}$ by assigning to every homogeneous space $G/H$ the abelian group $C_i(X^H)$. There are boundary operators $\delta_i:\underline{C_i(X)}\rightarrow\underline{C_{i-1}(X)}$ and an augmentation $\underline{C_0(X)}\rightarrow\underline{\mathbb{Z}}$ (here $\underline{\mathbb{Z}}$ is the constant module with value $\mathbb{Z}$) induced by the corresponding boundaries and augmentation of the cellular chain complexes of the fixed point spaces. Note that if $i>dim(X)$, $\underline{C_i(X)}$ is trivial.

 \begin{Def}

 Given $\mathfrak{O}_{\mathfrak{F}}G$-modules $M$ and $N$, its \emph{categorical tensor product} $M\bigotimes_{\mathfrak{F}}N$ is the quotient abelian group $\bigoplus_{\mathfrak{F}}M(G/K)\otimes N(G/K)$, by the relation generated by $M(\phi)(m)\otimes n\sim m\otimes N(\phi)(n)$, for $m\in M(G/K)$, $n\in N(G/K)$, and $\phi:G/K\rightarrow G/K$ a morphism. In this way a functor $-\otimes_{\mathfrak{F}} N:Mod_{\mathfrak{F}G}\rightarrow\mathfrak{Ab}$ is defined.

 \end{Def}

Now we are ready to define Bredon homology groups with coefficients for the case of a $G$-space:

\begin{Def}

If $X$ is a $G$-CW-complex and $N\in G$-$Mod_{\mathfrak{F}}$, with $\mathfrak{F}$ containing all the isotropy groups of the $G$-action in $X$, then the \emph{Bredon homology groups} of $X$ with coefficients in $N$ are defined as $H_i^{\mathfrak{F}}(X;N)=H_i(\underline{C_*(X)}\bigotimes_{\mathfrak{F}}N))$. The boundaries are induced by those of $\underline{C_*(X)}$.
\end{Def}

In particular, if $X=E_{\mathfrak{F}}G$, $\underline{C_*(X)}$ is a projective resolution of $\underline{\mathbb{Z}}$, and this implies that $H_*^{\mathfrak{F}}(E_{\mathfrak{F}}G;N)$ is equivalent to $H_i^{\mathfrak{F}}(G;N)$. Observe that $H_0^{\mathfrak{F}}(G;N)$ can be identified with $colim_{\mathfrak{O}_{\mathfrak{F}}G}N(G/K)$.

\subsubsection{The class of finite subgroups}

In this subsection we assume that $\mathfrak{F}$ is the collection of finite subgroups of $G$. First we define the $\mathfrak{O}_{\mathfrak{F}}G$-module $R_\C\in G$-$Mod_{\mathfrak{F}}$, which are the appropriate coefficients in the context of Baum-Connes conjecture (see Theorem \ref{Mislin2} below). The value of this functor on $G/K$ ($K$ a finite subgroup of $G$) is $R_\C(K)$, the complex representation ring of $K$; and to a $G$-map $G/H\rightarrow G/K$, given by an inclusion $gHg^{-1}\subset K$, we associate the homomorphism
$$Ind_{gHg^{-1}}^K: R_\C(H)=R_\C(gHg^{-1})\rightarrow R_\C(K),$$
where \emph{Ind} denotes the induced representation.

The next two results will be a key tool in the computation of the low-dimensional Bredon homology groups.

\begin{Thm}\label{Mislin1}(Theorem I.3.17 in \cite{MV03}) If $\dim \uE G=1$ (i.e. $\uE G$ is a tree), then $H_i^{\mathfrak{F}}(G,R_{\mathbb{C}})=0$ for $i>1$ and there is an exact sequence
$$0\ra H_1^{\mathfrak{F}}(G,R_{\mathbb{C}}) \ra \bigoplus_{[e]}R_{\mathbb{C}}(G_e)\stackrel{f}\rightarrow \bigoplus_{[v]}R_{\mathbb{C}}(G_v)\ra H_0^{\mathfrak{F}}(G,R_{\mathbb{C}}) \ra 0,$$
where the direct sums are respectively taken over the $G$-orbits of edges and vertices of the tree $\uE G$, and $G_e$ and $G_v$ stand for the isotropy groups of the edges and the vertices, respectively. For $\pi\in R_\C(G_e)$, we have $f(\pi)=(Ind_{G_e}^{G_{e^-}}\pi, -Ind_{G_e}^{G_{e^+}}\pi)\in R_\C(G_{e^-})\oplus R_\C(G_{e^+})$.
\end{Thm}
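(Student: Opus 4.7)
The plan is to compute $H_*^{\mathfrak{F}}(G, R_{\C})$ directly from the cellular Bredon chain complex $\underline{C_*(\uE G)}$, exploiting the fact that $\uE G$ is one-dimensional so that everything is concentrated in degrees $0$ and $1$. Since $\uE G$ is a classifying space for proper actions, the augmented complex $\underline{C_*(\uE G)} \to \underline{\Z}$ is a projective resolution of the constant module $\underline{\Z}$, so $H_i^{\mathfrak{F}}(G, R_{\C})$ is the $i$-th homology of the two-term complex $\underline{C_*(\uE G)} \otimes_{\mathfrak{F}} R_{\C}$. Vanishing for $i \ge 2$ is then immediate.

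The first main step is to identify the two nonzero chain groups with the direct sums appearing in the statement. Writing $\Delta_0$ and $\Delta_1$ for the discrete $G$-sets of vertices and edges of $\uE G$, I decompose $\Delta_i \cong \bigsqcup_{[\sigma]} G/G_\sigma$ over $G$-orbits, so that $\underline{C_i(\uE G)} \cong \bigoplus_{[\sigma]} \underline{\Z[G/G_\sigma]}$ as $\mathfrak{O}_{\mathfrak{F}}G$-modules. I then invoke a Yoneda-type identification: for any covariant $N \in G$-$Mod_{\mathfrak{F}}$ and any finite $K \le G$, the map $N(G/K) \to \underline{\Z[G/K]} \otimes_{\mathfrak{F}} N$ sending $n \mapsto \mathrm{id}_{G/K} \otimes n$ is an isomorphism, since the defining relation on the categorical tensor product, applied to morphisms $\phi : G/K \to G/H$, forces every pure tensor on a component $G/H$ to reduce to a tensor on $G/K$. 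Combining these two observations yields $\underline{C_i(\uE G)} \otimes_{\mathfrak{F}} R_{\C} \cong \bigoplus_{[\sigma]} R_{\C}(G_\sigma)$, which reproduces the terms in the stated exact sequence.

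The second step is to identify the induced differential with $f$. The cellular boundary sends an edge $e$ to the formal difference $e^+ - e^-$ of its endpoints; under the orbit decomposition, for a representative edge $e$ with stabilizer $G_e \subseteq G_{e^\pm}$ this becomes the pair of $G$-maps $G/G_e \to G/G_{e^\pm}$ coming from the inclusions of stabilizers, equipped with opposite signs. Under the Yoneda identification of the previous step, a $G$-map $G/H \to G/K$ coming from an inclusion $gHg^{-1} \subseteq K$ transports to the morphism $N(G/H) \to N(G/K)$ prescribed by $N$; for $N = R_{\C}$ this is, by the very definition of the coefficient system, the induction map $Ind_{gHg^{-1}}^K$. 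Hence the differential sends $\pi \in R_{\C}(G_e)$ to $(Ind_{G_e}^{G_{e^-}}\pi, -Ind_{G_e}^{G_{e^+}}\pi) = f(\pi)$, and reading off kernel and cokernel of $f$ yields the four-term exact sequence.

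The only part that demands real care is the Yoneda identification together with the bookkeeping that matches morphisms in $\mathfrak{O}_{\mathfrak{F}}G$ with the induction homomorphisms of $R_{\C}$; once this is in place, the rest is a formal manipulation of a two-term complex and the dimension of $\uE G$ closes the argument.
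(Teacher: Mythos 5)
The paper gives no proof of this statement---it is quoted verbatim as Theorem I.3.17 of \cite{MV03}---so there is nothing internal to compare against; your argument is correct and is essentially the standard proof from that reference: the cellular Bredon chain complex of the tree is a two-term projective resolution of $\underline{\Z}$, the co-Yoneda identification $\underline{\Z[G/K]}\otimes_{\mathfrak{F}}N\cong N(G/K)$ converts each chain group into the stated direct sum of representation rings, and the cellular boundary becomes the signed induction map $f$. The only cosmetic discrepancy is the orientation convention (with your choice $\partial e=e^+-e^-$ the minus sign would land on the $e^-$ component rather than the $e^+$ one), which has no effect on the kernel, the cokernel, or the exactness of the four-term sequence.
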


Aside its intrinsic interest, we would like to compute Bredon homology groups because of their relation with the left hand-side in the Baum-Connes conjecture. In the following, $ K_i^G$ stands for the $G$-homology theory associated to the $\mathfrak{O}(G)$-spectrum $\mathbf{K}^{top,G}$ (more information in \cite{MV03}, I.5). For us, of special interest will be the following theorem of Mislin, that is in turn a simplified version of the Atiyah-Hirzebruch spectral sequence in the case in which there is a low-dimensional model for $\underline{E}G$:

\begin{Thm}{(\cite{MV03}, Theorem I.5.27)}\label{Mislin2}
Let $G$ be an arbitrary group such that there exists a model for $\underline{E}G$ of dimension not bigger than 2. Then there is a natural short exact sequence:
$$0\rightarrow H_0^{\mathfrak{F}}(G;R_{\mathbb{C}})\rightarrow K_0^G(\underline{E}G)\rightarrow H_2^{\mathfrak{F}}(G;R_{\mathbb{C}})\rightarrow 0,$$
and a natural isomorphism $ H_1^{\mathfrak{F}}(G;R_{\mathbb{C}})\simeq  K_1^G(\underline{E}G)$.
\end{Thm}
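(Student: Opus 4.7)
The plan is to invoke the equivariant Atiyah--Hirzebruch spectral sequence for the $G$-CW complex $\uE G$ associated to the $\mathfrak{O}(G)$-spectrum $\mathbf{K}^{top,G}$ defining $K_*^G$. In the Davis--L\"uck framework, the skeletal filtration of $\uE G$ produces a spectral sequence
$$E^2_{p,q} \;=\; H_p^{\mathfrak{F}}\bigl(\uE G;\, \underline{K_q^G}\bigr) \;\Longrightarrow\; K_{p+q}^G(\uE G),$$
whose coefficient system $\underline{K_q^G}$ is $G/H\mapsto K_q^G(G/H)$. The decisive input is the calculation of coefficients at finite stabilizers: for a finite subgroup $H\le G$ we have $K_q^G(G/H)\cong K_q(C^*H)$, and since $C^*H$ is finite-dimensional this equals $R_{\mathbb{C}}(H)$ for $q$ even and vanishes for $q$ odd. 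In particular, under Bott periodicity, $\underline{K_q^G}$ equals $R_{\mathbb{C}}$ when $q$ is even and is zero when $q$ is odd.

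First I would establish collapse at $E^2$. The hypothesis $\dim\uE G\le 2$ confines $E^2$ to the columns $p\in\{0,1,2\}$, and the coefficient computation confines it to even rows. The differential $d^2\colon E^2_{p,q}\to E^2_{p-2,q+1}$ then lands in an odd row and is automatically zero, while higher differentials $d^r$ with $r\ge 3$ shift the column index by $-r\le -3$ and vanish for purely dimensional reasons. Hence $E^\infty=E^2$.

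Next I would read the two statements off the $E^\infty$-page. For total degree $n=1$, the only non-zero term with $p+q=1$, $0\le p\le 2$ and $q$ even is $(p,q)=(1,0)$, yielding
$$K_1^G(\uE G)\;\cong\; H_1^{\mathfrak{F}}(G;R_{\mathbb{C}}).$$
For total degree $n=0$, the surviving contributions are $(0,0)$ and $(2,-2)$. The skeletal filtration $0=F_{-1}\subseteq F_0\subseteq F_1\subseteq F_2=K_0^G(\uE G)$ satisfies $F_0=E^\infty_{0,0}=H_0^{\mathfrak{F}}(G;R_{\mathbb{C}})$, $F_1=F_0$ (since $(1,-1)$ sits in an odd row and contributes $0$), and $F_2/F_1=E^\infty_{2,-2}=H_2^{\mathfrak{F}}(G;R_{\mathbb{C}})$, where Bott periodicity identifies the coefficient system in row $-2$ with $R_{\mathbb{C}}$. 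Assembling this gives the desired short exact sequence
$$0\to H_0^{\mathfrak{F}}(G;R_{\mathbb{C}})\to K_0^G(\uE G)\to H_2^{\mathfrak{F}}(G;R_{\mathbb{C}})\to 0.$$

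The main obstacle is not the collapse argument (which is purely combinatorial) but rather setting up the equivariant Atiyah--Hirzebruch spectral sequence for a $G$-homology theory defined via an $\mathfrak{O}(G)$-spectrum and identifying its $E^2$-page with Bredon homology of the coefficient system $\underline{K_*^G}$. This requires the Davis--L\"uck machinery of $G$-equivariant homology theories together with the explicit computation $K_*^G(G/H)=K_*(C^*H)$ for finite $H$; once this is granted, the naturality of both the short exact sequence and the $K_1$-isomorphism follows from the naturality of the spectral sequence with respect to $G$-maps of $G$-CW complexes.
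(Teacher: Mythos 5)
Your argument is correct and is precisely the proof the paper has in mind: the statement is quoted from Mislin--Valette (Theorem I.5.27), and the paper itself describes it as ``a simplified version of the Atiyah--Hirzebruch spectral sequence in the case in which there is a low-dimensional model for $\underline{E}G$,'' which is exactly the collapse argument you give (even rows from $K_q(C^*H)=R_{\mathbb{C}}(H)$ or $0$, columns $p\le 2$, hence $E^2=E^\infty$ and the filtration yields the short exact sequence and the $K_1$-isomorphism). No gaps.
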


\subsection{Mart\'{i}nez spectral sequence}
\label{Martinez}
The main tool we are going to use to compute Bredon homology is an appropriate version of the Lyndon-Hochschild-Serre spectral sequence, developed by C. Mart\'{i}nez in \cite{Mar02}, and that we briefly describe in this section.

Let $N\rightarrow G\rightarrow \tilde{G}$ be a group extension, and consider a family $\mathfrak{F}$ of subgroups of $G$. Now we consider another family $\tilde{\mathfrak{h}}$ of subgroups of $\tilde{G}$, such that for any $L\in\mathfrak{F}$ the quotient $LN/N$ belongs to $\tilde{\mathfrak{h}}$; and such that if $N\leq S$ and $S/N$ belongs to $\tilde{\mathfrak{h}}$, then $S\cap K\in\mathfrak{F}$ for every $K\in\mathfrak{F}$. In particular, if these two families are the collections of finite subgroups, the two conditions hold. Consider now $\mathfrak{h}=\{S\leq G:N\leq S\textrm{ and }S/N\in\tilde{\mathfrak{h}}\}$, the ``pullback" of the family $\tilde{\mathfrak{h}}$.

 Let now $G$ be a group and a $\mathfrak{O}_{\mathfrak{F}}G$-module $D$ of coefficients. There is an equivalence of categories $G$-$Mod_{\mathfrak{h}}\rightarrow \tilde{G}$-$Mod_{\tilde{\mathfrak{h}}}$ given by $\tilde{D}(\tilde{G}/\tilde{S})=D(G/S)$, where $S$ is the pullback of $\tilde{S}$ in $G$. In this way it is defined a first quadrant spectral sequence such that $E_{p,q}^2=H_p^{\tilde{\mathfrak{h}}}(\tilde{G},\overline{H_q^{\mathfrak{F}\cap -}(-,D)})$, and converging to $E_{p,q}^{\infty}=H_{p+q}^{\mathfrak{F}}(G,D)$.

Some observations are pertinent here:

\begin{itemize}

\item In the page $E_2$, $\overline{H_q^{\mathfrak{F}\cap -}(-,D)}$ is a module in $G$-$Mod_{\tilde{\mathfrak{h}}}$.

\item The values of $\overline{H_q^{\mathfrak{F}\cap -}(-,D)}$ are computed in the following way: first take an element $\tilde{V}<\tilde{G}$ in $\tilde{\mathfrak{h}}$, and consider its pullback $V$ in $G$. Then consider the family $\mathfrak{F}_V$ of the subgroups in $\mathfrak{F}$ which are subgroups of $V$. Then the value of the functor $\overline{H_q^{\mathfrak{F}\cap -}(-,D)}$ over $\tilde{V}$ is $H_q^{\mathfrak{F}_V}(V,D)$. This fact makes clear the similarity with the classical Lyndon-Hochschild-Serre spectral sequence.

\item There is a corresponding spectral sequence for Bredon \emph{cohomology}, the we will not define here. In fact, the spectral sequence is constructed in a more general framework, see \cite{Mar02}.

\end{itemize}

\section{Invariants and co-invariants}

In our computations for both sides of the Baum-Connes conjecture for lamplighter groups, the relevant \emph{K}-homology and \emph{K}-theory groups will appear as invariants and co-invariants of a certain $\Z$-action on an abelian group. We will spend some time here discussing these concepts.

 For $M$ a $G$-module, we denote by $M^G$ the sub-module of $G$-invariants (i.e. the $G$-fixed points), and by $M_G=M/<m-g.m: m\in M,g\in G>$ the module of $G$-co-invariants.
 If $X$ is a set, $\Z X$ denotes the free abelian group on $X$, i.e. the group of almost everywhere zero functions $X\rightarrow\Z$.

\begin{Lem}\label{inv/coinv} Let $G$ be a countable group, and $X$ be a countable $G$-space; then the space $(\mathbb{Z}X)^G$ of invariants can be identified with $\mathbb{Z}Y$, where $Y$ is the set of finite orbits in $X$; and the space $(\mathbb{Z}X)_G$ of co-invariants can be identified with $\mathbb{Z}(G\backslash X)$. In particular, both are free abelian groups.
\end{Lem}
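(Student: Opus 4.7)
My plan is to decompose $X$ into $G$-orbits and treat the two statements independently, in each case exploiting the fact that $\Z X$ has a $G$-equivariant direct sum decomposition indexed by the orbits, namely $\Z X = \bigoplus_{O \in G\backslash X} \Z O$, with each summand a $G$-submodule.

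For the co-invariants, both $M \mapsto M_G$ and the formation of $\Z X$ commute with direct sums, so it suffices to show $(\Z O)_G \cong \Z$ for every orbit $O$. Fix a base point $x_0 \in O$; then every element of $\Z O$ is a finite $\Z$-linear combination of elements $g \cdot x_0$, and modulo the relations $m - g \cdot m$ each $g \cdot x_0$ becomes identified with $x_0$. Hence the augmentation map $\Z O \to \Z$ sending every basis element to $1$ factors through $(\Z O)_G$ and induces an isomorphism. Summing over orbits gives $(\Z X)_G \cong \Z(G\backslash X)$, as claimed.

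For the invariants, an element $f \in \Z X$ (viewed as an almost-everywhere-zero function $X \to \Z$) is $G$-invariant precisely when it is constant along every orbit. Since $f$ has finite support, the value of $f$ on any \emph{infinite} orbit must be $0$; equivalently, the $G$-invariant part of $\Z O$ is $\Z \cdot \mathbf{1}_O$ if $O$ is finite and $0$ if $O$ is infinite. (Formation of invariants commutes with the direct sum here because invariants commute with arbitrary products and our direct sum embeds in the corresponding product.) Collecting the contributions from each orbit gives $(\Z X)^G = \bigoplus_{O \in Y} \Z \cdot \mathbf{1}_O \cong \Z Y$, where $Y$ is the set of finite orbits.

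There is essentially no obstacle; the only point requiring a moment of care is precisely the asymmetry between invariants and co-invariants regarding infinite orbits, namely that infinite orbits are killed on the invariant side (by the finite support condition) but each contribute one copy of $\Z$ on the co-invariant side. Since both resulting groups are $\Z$-modules on a set, freeness is automatic.
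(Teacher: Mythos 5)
Your proof is correct and follows essentially the same route as the paper: invariance forces constancy on orbits, finite support kills infinite orbits, and the co-invariants are computed by the orbit-wise summation (augmentation) map, which is exactly the map $I$ the paper writes down globally rather than orbit by orbit. No gaps.
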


\begin{proof} Since a $G$-fixed function on $X$ must be constant on $G$-orbits, the first claim is clear. For the second: let $S$ be a system of representatives for $G$-orbits in $X$. Then the map $$I: \mathbb{Z}X\rightarrow \mathbb{Z}(G\backslash X): \phi\mapsto (s\mapsto\sum_{g\in G}\phi(g.s))$$
is onto with kernel precisely $\langle g\phi-\phi:g\in G,\phi\in\mathbb{Z}X\rangle$.
\end{proof}

If $S$ is a system of representatives for the $G$-orbits in $X$, as in the above proof, then $\Z X=<g.m-m:m\in M,g\in G> \oplus \Z S$ gives a splitting of the quotient map $\Z X \rightarrow (\Z X)_G$.

Let $F$ be a finite pointed set, pointed by some element $o\in F$. We denote by $F^{(\Z)}$ the countable set of maps $\Z\rightarrow F$ that are almost everywhere equal to $o$. We want to apply Lemma \ref{inv/coinv} to $X=F^{(\Z)}$, with $G=\Z$ acting by the shift $\alpha$. We describe a set of representatives for the orbits of $\alpha$ on $X$. For $g\in F$, by abuse of notation we also denote by $g$ the element of $F^{(\Z)}$ taking the value $g$ at $k=0$ and $o$ at $k\neq 0$. For  $n\geq 0,\varepsilon\in F^n$, and $g,h\in F\backslash\{o\}$, let $g\varepsilon h$ be the element of $F^{(\Z)}$ defined by
$$(g\varepsilon h)(k)=\left\{\begin{array}{ccc}g & if & k=0 \\\varepsilon_k & if & k=1,...,n \\h & if & k=n+1 \\o &  & otherwise.\end{array}\right.$$

\begin{Lem}\label{repres} A set $S$ of representatives for the $\alpha$-orbits on $F^{(\Z)}$, is:
$$S=\{g:g\in F\}\cup\{g\varepsilon h:n\geq 0,\varepsilon\in F^n,g,h\in F\backslash\{o\}\}.$$
In particular $\Z(F^{(\Z)})$ is the direct sum of $<m-\alpha(m):m\in \Z(F^{(\Z)})>$ and $\Z S$.
\end{Lem}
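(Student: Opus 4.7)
The plan is to prove the lemma in two halves: first that every element of $F^{(\Z)}$ is $\alpha$-equivalent to some element of $S$ (existence of a representative), then that distinct elements of $S$ lie in distinct $\alpha$-orbits (uniqueness). The final splitting assertion will follow immediately from the general observation recorded just before the lemma statement, applied to the system $S$ of orbit representatives.

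For existence, I would classify an element $f\in F^{(\Z)}$ by its support $\mathrm{supp}(f)=\{k\in\Z:f(k)\neq o\}$, which is a finite subset of $\Z$. If $\mathrm{supp}(f)=\emptyset$ then $f$ is the constant map at $o$, which is itself the element $o\in F\subset S$ (and is fixed by $\alpha$). If $\mathrm{supp}(f)=\{a\}$ is a singleton, then an appropriate power of $\alpha$ translates $f$ to have support $\{0\}$ and value $f(a)$ there, namely to the element $f(a)\in F\setminus\{o\}\subset S$. If $|\mathrm{supp}(f)|\ge 2$, let $a=\min\mathrm{supp}(f)$ and $b=\max\mathrm{supp}(f)$; after translating so that the support sits inside $\{0,1,\dots,b-a\}$, I read off $g=f(a)$, $h=f(b)$, $n=b-a-1\ge 0$, and $\varepsilon_k=f(a+k)$ for $k=1,\dots,n$. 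By construction $g,h\neq o$, so this translated element is exactly $g\varepsilon h\in S$.

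For uniqueness, the key observation is that every element of $S\setminus\{o\}$ has its minimal support point equal to $0$ (for both the singletons $g\in F\setminus\{o\}$ and the patterns $g\varepsilon h$ with $g\neq o$), while $o\in S$ is the only element of $S$ with empty support. Since the shift $\alpha$ translates the support of any function by a fixed integer, two elements of $S$ lying in a common $\alpha$-orbit must be related by the zero shift, hence are equal. (The element $o$ is handled separately because it is $\alpha$-fixed and its orbit contains no other function.)

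I don't anticipate a serious obstacle: the argument is essentially a careful bookkeeping of supports under shift, and the main thing to watch is boundary cases (support empty, support a singleton, and the degenerate case $n=0$ of an empty middle word $\varepsilon$, which still forces $g\varepsilon h$ to have support $\{0,1\}$ with both endpoints nonzero). Once existence and uniqueness are established, the final sentence is simply the general decomposition $\Z X=\langle m-gm\rangle\oplus\Z S$ from the paragraph preceding Lemma \ref{repres}, applied to $X=F^{(\Z)}$ with $G=\Z$ acting by $\alpha$.
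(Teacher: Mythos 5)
Your proposal is correct and follows essentially the same route as the paper's own proof: classify $\eta\in F^{(\Z)}$ by the size of its support and shift the minimum of the support to $0$, with the splitting statement deferred to the general remark preceding the lemma. The only difference is that you spell out the uniqueness step (distinct elements of $S$ lie in distinct orbits) via the normalization of the support minimum, which the paper simply declares to be clear.
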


\begin{proof} It is clear that no two distinct elements of $S$ are in the same orbit for the shift $\alpha$. For $\eta\in F^{(\Z)}$, let the support of $\eta$, denoted by $supp\,\eta$,  be the set of integers $k$ where $\eta_k\neq o$. If $supp\,\eta$ is empty, then $\eta=\delta_0^o$. If $supp\,\eta$ has just one point, then $\eta$ is in the same orbit as some $\delta_0^g$ with $g\neq o$. If $supp\,\eta$ has at least two points, then by shifting the minimum of $supp\,\eta$ to $0$ we get some $g\varepsilon h$.

\end{proof}

\section{The Baum-Connes conjecture for locally finite groups}\label{K-locfin}


Let $F$ be a finite group. Then $C^*F=\C F$, the complex group ring, and $\uE F=\{*\}$, the one-point space. So $K_1(C^*F)=K_1^F(\uE F)=0$ while $K_0(C^*F)$ and $K_0^F(\uE F)$ are both naturally isomorphic to the (complex) representation ring $R_\C(F)$, that is the free abelian group $\Z\hat{F}$ on the set $\hat{F}$ of irreducible representations of $F$. Recall that $\C F=\bigoplus_{\pi\in\hat{F}} M_{\dim\pi}(\C)$, where $M_n(\C)$ denotes the algebra of $n\times n$-matrices with complex coefficients. From Example II.2.11 in $\cite{MV03}$, the assembly map $\mu_0^F:R_\C(F)\ra K_0(\C F)$ maps $\pi\in\hat{F}$ to the class $[e_\pi]$ of a minimal idempotent $e_\pi\in M_{\dim\pi}(\C)$.

When $F$ is a subgroup of the finite group $G$, denote by $i:F\ra G$ the inclusion; we have a commutative diagram
$$\begin{array}{ccccc}R_\C(F) & &\stackrel{Ind_F^G}{\longrightarrow} & &R_\C(G) \\
& & & & \\
\mu_0^F \downarrow & & & & \downarrow\mu_0^G \\
& & & & \\
K_0(C^*F) & & \stackrel{i_*}{\longrightarrow} & &K_0(C^*G), \end{array}$$
which is commutative by Proposition 2.3.1 in \cite{GHJ89}.

Now let $B$ be a (countable) locally finite group, given as the co-limit of an increasing sequence $(B_n)_{n>0}$ of finite groups. By Corollary I.5.2 in \cite{MV03}, $K_i^B(\uE B)$ is the co-limit of the $K_i^{B_n}(\uE B_n)$'s, and by Theorem I.5.10 in \cite{MV03} the assembly map $\mu_i^B$ is the co-limit of the $\mu_i^{B_n}$'s (and the Baum-Connes conjecture holds for $B$). For later computations with lamplighter groups, we need to identify explicitly the co-limits on both sides of the assembly map.

Observe that $B$ acts over $\uE B$, which is a tree basically given by Bass-Serre theory: the sets of vertices and edges are both identified with $\coprod_{n>0} B/B_n$, connecting the edge $gB_n$ the vertices $gB_n$ and $gB_{n+1}$ (for $g\in B$). Hence, $B$ appears as the fundamental group of a ray of groups with vertices $v_1,v_2,...$, vertex group $B_n$ at $v_n$ and edge group $B_n$ at the edge $v_nv_{n+1}$.

\includegraphics[width=12cm,height=12cm]{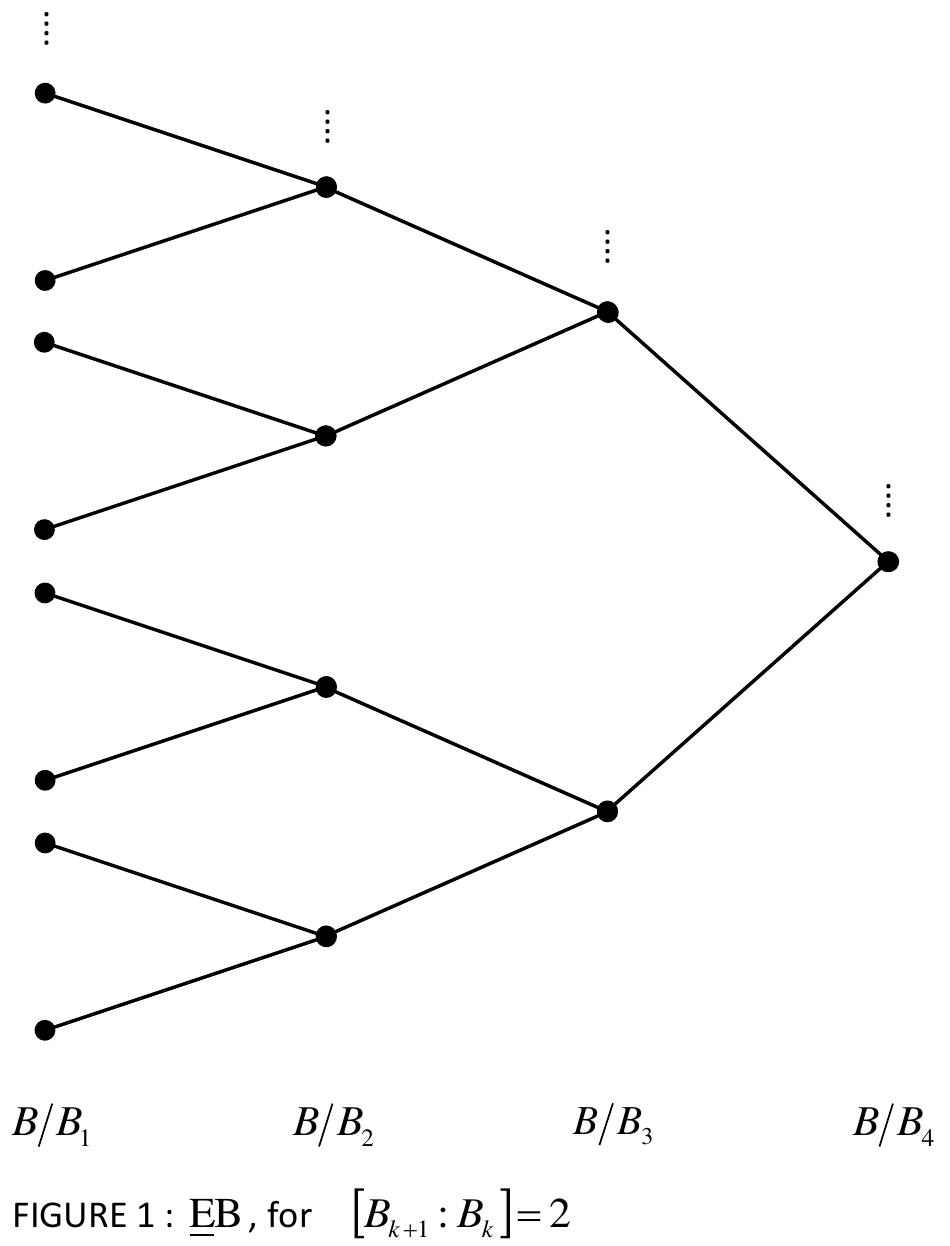}


\begin{Prop}\label{freeabelian1} Let $(F_n)_{n>0}$ be a collection of finite groups, set $B_n=\oplus_{k=1}^n F_k$ and $B=\oplus_{k=1}^\infty F_k$. Embed the dual $\hat{B}_n$ into $\hat{B}_{n+1}=\hat{B}_n\times\hat{F}_{n+1}$ by
$$s_n:\hat{B}_n\ra \hat{B}_{n+1}:\pi_n\mapsto \pi_n\otimes 1_{F_{n+1}},$$
where $1_{F_n}$ denotes the trivial 1-dimensional representation of $F_n$. Then the free abelian group on the co-limit of the system of sets $(\hat{B}_n,s_n)_{n>0}$, is naturally isomorphic to $K_0^B(\uE B)$.
\end{Prop}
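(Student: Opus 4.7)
The plan is to combine the explicit 1-dimensional tree model for $\uE B$ with Mislin's Theorem \ref{Mislin2} and the exact sequence of Theorem \ref{Mislin1}, and then to perform an explicit change of basis to recognize the resulting colimit. Since $\dim\uE B=1$, we have $H_2^{\mathfrak F}(B,R_{\C})=0$, and so Mislin's theorem gives at once $K_0^B(\uE B)\simeq H_0^{\mathfrak F}(B,R_{\C})$.

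First I would write down Theorem \ref{Mislin1}'s exact sequence for the ray of groups depicted above: the $B$-orbits of vertices and of edges are both indexed by $n\geq 1$ with stabilizer $B_n$, the $n$-th edge joining $v_n$ to $v_{n+1}$. The middle map sends $\pi\in R_{\C}(B_n)$ (the $n$-th edge component) to $\pi$ in the $v_n$-vertex component and to $-\Ind_{B_n}^{B_{n+1}}\pi$ in the $v_{n+1}$-vertex component. Taking the cokernel then identifies $K_0^B(\uE B)$ with the direct limit $\operatorname{colim}_n(R_{\C}(B_n),\Ind_{B_n}^{B_{n+1}})$.

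Next I would identify this colimit with $\Z X_\infty$, where $X_\infty:=\operatorname{colim}_n(\hat B_n,s_n)$, via the tensor decomposition $R_{\C}(B_n)=R_{\C}(F_1)\otimes_{\Z}\cdots\otimes_{\Z}R_{\C}(F_n)$. Splitting $R_{\C}(F_k)=\Z\langle 1_{F_k}\rangle\oplus\Z\hat F_k^{\star}$, where $\hat F_k^{\star}:=\hat F_k\setminus\{1_{F_k}\}$, yields
$$ R_{\C}(B_n)=\bigoplus_{S\subseteq\{1,\dots,n\}}\bigotimes_{k\in S}\Z\hat F_k^{\star},$$
and each summand is canonically the free abelian group on the subset of $X_\infty$ consisting of sequences with support exactly $S$. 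Because $\Ind_{B_n}^{B_{n+1}}$ is tensoring with the regular representation $1_{F_{n+1}}+\sum_{\sigma\in\hat F_{n+1}^{\star}}(\dim\sigma)\sigma$, in these coordinates it acts as the identity on each $S$-summand, plus a ``new'' contribution landing in the $(S\cup\{n+1\})$-summand of $R_{\C}(B_{n+1})$. The transition maps are thus upper-triangular with identity diagonal with respect to the partial order on finite subsets of $\N$; each $S$-summand stabilises once $n\geq\max S$, and the colimit equals $\bigoplus_{S\subseteq_{\mathrm{fin}}\N}\Z\bigl(\prod_{k\in S}\hat F_k^{\star}\bigr)=\Z X_\infty$.

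The main obstacle is the bookkeeping in this second step: one needs to check that the tensor-product coordinates are compatible with the $s_n$'s on the level of sets, and that the induction maps really take the claimed upper-triangular form (which hinges on the clean formula $\Ind_{B_n}^{B_{n+1}}\pi=\pi\otimes\mathrm{reg}_{F_{n+1}}$). Once this is in place, the resulting natural isomorphism $\Z X_\infty\to K_0^B(\uE B)$ is the expected one: an element of $X_\infty$, represented by $\pi$ in $\hat B_m$ with $m$ the last index where a nontrivial coordinate occurs, is sent to the class $[\pi]_m\in R_{\C}(B_m)$ followed by the structure map into the colimit.
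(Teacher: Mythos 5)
Your argument is correct, and it rests on the same pillars as the paper's own proof: Theorem \ref{Mislin2} together with the exact sequence of Theorem \ref{Mislin1} for the ray of groups reduce everything to the cokernel of $f$, which you rightly recognize as $\operatorname{colim}_n(R_\C(B_n),\Ind_{B_n}^{B_{n+1}})$, and the identity $\Ind_{B_n}^{B_{n+1}}\pi=\pi\otimes\lambda_{F_{n+1}}$ is exactly the Frobenius-reciprocity input the paper uses. Where you genuinely diverge is the final bookkeeping. The paper stays with the cokernel presentation: it exhibits the explicit complement $H=\Z\bigl(\coprod_n(\hat B_n\setminus s_{n-1}(\hat B_{n-1}))\bigr)$, shows $\operatorname{Im}f+H$ is everything by induction, and shows $\operatorname{Im}f\cap H=\{0\}$ by a finite-support argument. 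You instead regrade $R_\C(B_n)$ by the support of a representation and observe that the transition maps are unipotent upper-triangular for this grading; this buys a structural explanation of why the answer is $\Z X_\infty$ rather than a bare verification. The one sentence you should firm up is ``each $S$-summand stabilises once $n\ge\max S$, and the colimit equals $\bigoplus_S\cdots$'': the restriction of the transition maps to an $S$-summand does \emph{not} stabilise (the off-diagonal contributions keep accumulating through ever larger supports), so stabilisation is not literally the reason. What your triangularity observation actually yields is that each $\Ind_{B_n}^{B_{n+1}}$ is a \emph{split} injection whose complement is precisely the block of new summands $\bigoplus_{S\ni n+1}\bigotimes_{k\in S}\Z\hat F_k^{\star}$; a colimit of split injections is the direct sum of the first term with the successive complements, which is $\Z X_\infty$. (Equivalently, you could import the paper's finite-support argument for $\operatorname{Im}f\cap H=\{0\}$.) With that deduction made explicit, your proof is complete and matches the paper's conclusion, including the description of the isomorphism on basis elements.
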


\begin{proof} By Theorem \ref{Mislin1}, we have $H_i^{\mathfrak{Fin}}(B,R_{\mathbb{C}})=0$ for $i\geq 1$; so by Theorem \ref{Mislin2}, $H_0^{\mathfrak{Fin}}(B;R_{\mathbb{C}})\simeq K_0^B(\uE B)$. Using the description of $B$ as the fundamental group of a ray of groups, and appealing to Theorem \ref{Mislin1} again, $H_0^{\mathfrak{Fin}}(B;R_{\mathbb{C}})$ is the co-kernel of $f:\bigoplus_{n>0}R_\C(B_n)\ra \bigoplus_{n>0}R_\C(B_n)$. Remark that $f$ maps $\pi_n\in R_\C(B_n)$ to $(\pi_n,-Ind_{B_n}^{B_{n+1}}\pi_n)\in R_\C(B_n)\oplus R_\C(B_{n+1}).$ Now, denoting by $\lambda_{F_n}$ the regular representation of $F_n$, we have by an easy application of Frobenius reciprocity:
$$Ind_{B_n}^{B_{n+1}}\pi_n=\pi_n\otimes \lambda_{F_{n+1}}=\sum_{\sigma\in\hat{F}_{n+1}}\dim\sigma\cdot(\pi_n\otimes\sigma).$$
Since $\hat{B}_{n+1}=\hat{B}_n\times\hat{F}_{n+1}$, we denote by $r_n:\hat{B}_{n+1}\ra\hat{B}_n$ the projection on the first factor, and we may re-write:
$$Ind_{B_n}^{B_{n+1}}\pi_n=\sum_{\pi_{n+1}\in\hat{B}_{n+1}:r_n(\pi_{n+1})=\pi_n}\frac{\dim\pi_{n+1}}{\dim r_n(\pi_{n+1})}\cdot\pi_{n+1}.$$

We now view $\bigoplus_{n>0}R_\C(B_n)$ as $\Z(\coprod_{n>0}\hat{B}_n)$, the group of integer-valued functions with finite support on the disjoint union $\coprod_{n>0}\hat{B}_n$. In that picture we have, for $\phi\in \Z(\coprod_{n>0}\hat{B}_n)$:
\begin{equation}\label{f}
(f(\phi))(\pi_n)=\phi(\pi_n)- \phi(r_{n-1}(\pi_n)).\frac{\dim\pi_n}{\dim r_{n-1}(\pi_n)}
\end{equation}
(where $\pi_n\in\hat{B}_n$).

Consider now the subgroup $H=:\Z(\coprod_{n>0}(\hat{B}_n\backslash s_{n-1}(\hat{B}_{n-1})))$. The proposition will follow from:

{\bf Claim:} $Im\,f\oplus H=\Z(\coprod_{n>0}\hat{B}_n)$ (so $H\simeq Coker\,f$).

To prove the claim, first observe that $Im\,f + H=\Z(\coprod_{n>0}\hat{B}_n)$. To see this, we first prove by induction over $n$ that every $\pi_n\in\hat{B}_n$ (viewed as a basis element of $\Z(\coprod_{n>0}\hat{B}_n)$) is in $Im\, f+H$. This is clear for $n=1$, as $\Z(\hat{B}_1)\subset H$. For $n>1$, this is also clear if $\pi_n\notin s_{n-1}(\hat{B}_{n-1})$ (then $\pi_n\in H$). So assume $\pi_n=s_{n-1}(\pi_{n-1})=\pi_{n-1}\otimes 1_{F_n}$, for some $\pi_{n-1}\in\hat{B}_{n-1}$. Then
$$f(\pi_{n-1})=\pi_{n-1}-\pi_n-\sum_{\sigma\in\hat{F}_n,\sigma\neq 1_{F_n}}\dim\sigma\cdot (\pi_{n-1}\otimes\sigma),$$
or
$$\pi_n=\pi_{n-1}-f(\pi_{n-1})-\sum_{\sigma\in\hat{F}_n,\sigma\neq 1_{F_n}}\dim\sigma\cdot (\pi_{n-1}\otimes\sigma).$$
Using the induction hypothesis for $\pi_{n-1}$, the right hand-side belongs to $Im\,f+H$.

It remains to show that $Im\,f\cap H=\{0\}$, so let $\phi\in \Z(\coprod_{n>0}\hat{B}_n)$ be such that $f(\phi)$ vanishes on $\coprod_{n>0}s_n(\hat{B}_n)$. We prove that $\phi$ is identically zero. So for $\pi\in\hat{B}_n$, we have, using $r_n\circ s_n=Id_{\hat{B_n}}$ and formula (\ref{f}):
$$0=f(\phi)(s_n(\pi_n)) =\phi(s_n(\pi_n))-\phi(\pi_n),$$
i.e. $\phi(\pi_n)=\phi(s_n(\pi_n))$. Iterating, we have for every $k>0$:
$$\phi(s_{n+k}(...(s_n(\pi_n))))=\phi(\pi_n).$$
Since $\phi$ has finite support, for $k$ large enough we have $\phi(s_{n+k}(...(s_n(\pi_n))))=0$, hence $\phi(\pi_n)=0$.
\end{proof}

For $F$ a finite group, we denote by $Min(F)$ the set of equivalence classes of minimal idempotents in the group ring $\C F$. If we write $\C F=\bigoplus_{\pi\in\hat{F}} M_{\dim\pi}(\C)$ as above, $Min(F)$ can be realized by picking a rank one projection in each direct summand.

If $S$ is a pointed set, pointed by $o\in S$, we define $S^{(\Z)}$ as the set of maps $\Z\ra S$ taking the value $o$ for almost every integer, pointed by the constant map with value $o$.
If $F$ is a finite group, the dual $\hat{F}$ is pointed by the trivial representation $1_F$, and the set $Min(F)$ is pointed by the projection $p_F=\frac{1}{|F|}\sum_{g\in F} g$ associated with the trivial representation.

We come back to the setup of Proposition \ref{freeabelian1}. At the level of group rings, the analogue of the map $s_n$ of Proposition \ref{freeabelian1} is given by the {\it non-unital}, injective homomorphism
$$j_n:\C B_n\ra \C B_{n+1}=\C B_n\otimes\C F_{n+1}: x\mapsto x\otimes p_{F_{n+1}},$$
such that
$$(j_n)_*\circ\mu_0^{B_n}=\mu_0^{B_{n+1}}\circ s_n.$$
Then $j_n(Min(B_n))\subset Min(B_{n+1})$ and $K_0(C^*B)$ appears as the free abelian group on the co-limit of the system of sets $(Min(B_n),j_n)_{n>0}$.

When all $F_n$'s are equal to the same finite group $F$, our computations immediately yield:

\begin{Cor}\label{K0locfin} Let $F$ be a non-trivial finite group, and $B=\oplus_\Z F$. Then:
\begin{enumerate}
\item The free abelian group $\Z(\hat{F}^{(\Z)})$ is naturally isomorphic to $K_0^B(\uE B)$, with isomorphism
$$\pi\in\hat{F}^{(\Z)}\mapsto \otimes_{k\in supp\,\pi} \pi_k\in R_\C(\oplus_{k\in supp\,\pi}F_k)\subset K_0^B(\uE B)$$
(where $F_k$ denotes the $k$-th copy of $F$ in $B$).

\item  The free abelian group $\Z((Min(F))^{(\Z)})$ is naturally isomorphic to $K_0(C^*B)$, with isomorphism
$$e\in (Min(F))^{(\Z)}\mapsto [\otimes_{k\in supp\,e}e_k]\in Min(\oplus_{k\in supp\,e} F_k)\subset K_0(C^*B),$$
and in particular the constant map with value $p_F$ is mapped to $[1]$, the class of 1 in $C^*B$.
\item For $\pi\in\hat{F}^{(\Z)}$ we have $\mu_B(\pi)=\otimes_{k\in supp\,\pi}\mu_F(\pi_k)\in K_0(C^*B)$.
\end{enumerate}
\hfill$\square$
\end{Cor}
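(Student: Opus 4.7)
The plan is to specialize Proposition \ref{freeabelian1} to the case where all $F_n$ equal the same finite group $F$, and then reinterpret the colimits of sets $(\hat{B}_n, s_n)$ and $(Min(B_n), j_n)$ as $\hat{F}^{(\Z)}$ and $(Min(F))^{(\Z)}$ respectively. Note first that $B = \oplus_\Z F \cong \oplus_{n>0} F$, so after relabelling indices Proposition \ref{freeabelian1} applies directly; the fact that we index by $\Z$ rather than $\N$ only matters for the lamplighter action later, not for this identification.

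For part (1), recall that $\hat{B}_n = \hat{F}_1 \times \cdots \times \hat{F}_n$ and that $s_n(\pi_1 \otimes \cdots \otimes \pi_n) = \pi_1 \otimes \cdots \otimes \pi_n \otimes 1_{F_{n+1}}$. Thus every element of the directed system $(\hat{B}_n, s_n)$ becomes, under the colimit, an equivalence class whose unique canonical representative is a tuple $(\pi_k)_{k \ge 1}$ with $\pi_k = 1_F$ for $k$ large; this is exactly an element of $\hat{F}^{(\Z)}$ (with basepoint $1_F$). Under this identification, $\pi \in \hat{F}^{(\Z)}$ corresponds to the class of $\otimes_{k \in supp\,\pi} \pi_k \in \hat{B}_n$ for any $n \ge \max(supp\,\pi)$, so Proposition \ref{freeabelian1} gives the required natural isomorphism $\Z(\hat{F}^{(\Z)}) \simeq K_0^B(\uE B)$ with the claimed formula.

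For part (2), the argument is formally identical: the directed system $(Min(B_n), j_n)$ has $j_n(\otimes_{k=1}^n e_k) = (\otimes_{k=1}^n e_k) \otimes p_{F_{n+1}}$, so a class in the colimit has a unique canonical representative given by a finitely supported sequence in $Min(F)$ basepointed by $p_F$. As explained right before the statement of Corollary \ref{K0locfin}, $K_0(C^*B)$ is the free abelian group on this colimit, yielding the stated isomorphism. The constant map with value $p_F$ corresponds to the empty-support class, i.e.\ to $[1_{\C B_1}]$, and by the compatibility $j_n(1_{\C B_n}) \neq 1_{\C B_{n+1}}$ is not relevant, because under the $K_0$-level embedding the class $[p_{F_1} \otimes \cdots \otimes p_{F_n}]$ in $K_0(\C B_n)$ pushes forward to $[1]$ in $K_0(C^*B)$.

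For part (3), we use the commutative diagram
\[
\begin{array}{ccc}
R_\C(B_n) & \stackrel{\mu_0^{B_n}}{\longrightarrow} & K_0(\C B_n) \\
\downarrow s_{n*} & & \downarrow (j_n)_* \\
R_\C(B_{n+1}) & \stackrel{\mu_0^{B_{n+1}}}{\longrightarrow} & K_0(\C B_{n+1})
\end{array}
\]
which, as noted in the paragraph preceding the corollary, commutes. Since the assembly map on a finite group $F$ sends $\pi \in \hat{F}$ to the class of a rank one projection in $M_{\dim \pi}(\C)$—which is precisely $\mu_F(\pi) \in Min(F)$—the tensor product formula of (1) is transported by the assembly maps to the tensor product formula of (2), yielding $\mu_B(\pi) = \otimes_{k \in supp\,\pi} \mu_F(\pi_k)$. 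The only possibly delicate point is ensuring that the finitely many assembly maps $\mu_0^{B_n}$ assemble correctly to $\mu_0^B$ in the colimit, but this is exactly the content of Theorem I.5.10 of \cite{MV03} cited above. I expect no serious obstacle; everything is a bookkeeping exercise once Proposition \ref{freeabelian1} and the compatibility of the $\mu_0^{B_n}$'s with the maps $s_n, j_n$ are in place.
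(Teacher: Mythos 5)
Your overall strategy — specialize Proposition \ref{freeabelian1} and the discussion of $(Min(B_n),j_n)$ that precedes the Corollary, identify the colimits of pointed sets with $\hat{F}^{(\Z)}$ and $Min(F)^{(\Z)}$, and deduce (3) from $(j_n)_*\circ\mu_0^{B_n}=\mu_0^{B_{n+1}}\circ s_n$ — is exactly what the paper intends (it offers no written proof), and parts (1) and (3) are fine at that level of detail. However, your treatment of the final clause of part (2) contains a false step: the class $[p_{F_1}\otimes\cdots\otimes p_{F_n}]\in K_0(\C B_n)$ does \emph{not} push forward to $[1]$ in $K_0(C^*B)$. The colimit $K_0(C^*B)=\varinjlim K_0(\C B_n)$ is taken along the \emph{unital} inclusions $i_n$, under which $[1_{\C B_n}]\mapsto[1_{\C B_{n+1}}]$; since $[1_{\C B_n}]=\sum_{\pi\in\hat{B}_n}\dim\pi\cdot[e_\pi]$ differs from the single minimal class $[p_{B_n}]=[p_{F_1}\otimes\cdots\otimes p_{F_n}]$ and all the maps $(i_n)_*$ are injective, the two classes remain distinct in the colimit. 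The constant map with value $p_F$ is sent to $[1]$ for a different reason: in the displayed formula the representative of $e$ lives in the subalgebra $\C\bigl(\oplus_{k\in supp\,e}F_k\bigr)$, included \emph{unitally} into $C^*B$, and for empty support this subalgebra is $\C\cdot 1$, so the class is $[1]$ by convention.

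This is not merely a slip of notation, because your dismissal of the non-unitality of $j_n$ as ``not relevant'' is backwards: it is precisely why the naive map ``class of $e$ at level $n\mapsto$ image of $[e]$ under $K_0(\C B_n)\to K_0(C^*B)$'' is \emph{not} well defined on the colimit set (one has $[j_n(e)]\neq (i_n)_*[e]$ in $K_0(\C B_{n+1})$). The basis actually produced by the Claim in Proposition \ref{freeabelian1} consists of the images of the minimal-level representatives $\pi_n\in\hat{B}_n\setminus s_{n-1}(\hat{B}_{n-1})$, whereas the Corollary's basis consists of the images of $\otimes_{k\in supp\,\pi}\pi_k$ pushed forward from the support subgroup by induction. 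These two families differ, and one must check that passing from one to the other is a change of basis — it is, because expanding $Ind_{\oplus_{k\in supp\,\pi}F_k}^{B_n}(\otimes_{k\in supp\,\pi}\pi_k)$ expresses each new vector as the old basis vector plus an integer combination of basis vectors indexed by classes of strictly larger support, a unipotent (triangular) transformation. Your proposal needs this reconciliation step, and the sentence about $[p_{F_1}\otimes\cdots\otimes p_{F_n}]$ should be deleted or replaced as above.
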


\section{Lamplighter groups over finite groups}

Let $F$ be a finite group, $B=\oplus_\Z F$ and $L=F\wr\Z=B\rtimes \Z$, where $\Z$ acts on $B$ by the shift.

\subsection{A model for $\uE L$}
\label{Fluch}
Let $B$ be a group, and $\alpha\in Aut(B)$ an automorphism. A result of M. Fluch (Proposition 5.11 in \cite{Flu11}) states that, if $B$ admits a $n$-dimensional model for $\uE B$, then the semi-direct product $L=B\rtimes\Z$ admits a $(n+1)$-dimensional $\uE L$. In particular, if $B$ is  a locally finite group, we observed in Section \ref{K-locfin} that $B$ admits a tree as $\uE B$; from Fluch's result, $L$ admits a 2-dimensional $\uE L$. We now recall Fluch's construction.

Let $B$ be the co-limit of an increasing sequence $(B_n)_{n>0}$ of finite groups. We recall that $\uE B$ is a tree $X$, with vertex and edge sets $V=E=\coprod_{n>0} B/B_n$, where the edge $xB_n$ connects the vertices $xB_n$ and $xB_{n+1}$ (for $x\in B$); there is an edge between cosets $xB_n$ and $yB_{n+1}$ if and only if $xB_n\subset yB_{n+1}$.
Let $\alpha$ be an automorphism of $B$; we assume that $\alpha^{-1}(B_n)\subset B_{n+1}$ for every $n$.

\begin{Ex} Let $\alpha$ be the shift on $B=\oplus_\Z F$, with $F$ a finite group. Set $B_n=:\oplus_{k=-n}^n F$: then $B$ is the co-limit of the $B_n$'s, and $\alpha^{-1}(B_n)\subset B_{n+1}$.
\end{Ex}

For $k\in\Z$, let $X_k$ be the tree $X$ with twisted $B$-action: $g\cdot_k xB_n=\alpha^{-k}(g)xB_n$, for $g,x\in B$. Let $f_k: X_k\rightarrow X_{k+1}$ be defined by $f_k(xB_n)=\alpha^{-1}(x)B_{n+1}$; it follows from our assumption that $f_k$ is well-defined; it is easily checked that $f_k$ is $B$-equivariant:
$$f_k(g\cdot_k xB_n)=g\cdot_{k+1}f_k(xB_n),$$
and $f_k$ maps edges of $X_k$ to edges of $X_{k+1}$.

Now we define the space $Y$ as a ``mapping telescope'':
$$Y=(\coprod_{k\in\Z} (X_k\times [k,k+1]))/\sim,$$
where $(xB_n,k+1)$ in the component $X_k\times [k,k+1]$ is identified to $(f_k(xB_n),k+1)$ in the component $X_{k+1}\times [k+1,k+2]$. Since $f_k$ is $B$-equivariant, the $B$-actions on the components glue together to yield a $B$-action on $Y$. We call the sub-complex $X_k$ the {\it $k$-th level} of $Y$.

We also have a ``vertical'' action of $\Z$ on $Y$ defined by $\sigma(xB_n,t)=(xB_n,t+1)$. It is easily checked that this action is well-defined, and free with fundamental domain $D=:X_0\times [0,1[$. Moreover, for $(xB_n,t)\in Y$ with $k\leq t\leq k+1$ and $g\in B$, we have:
$$\sigma\circ g\circ\sigma^{-1}(xB_n,t)=\sigma\circ g(xB_n,t-1)=\sigma (\alpha^{1-k}(g)xB_n,t-1)$$
$$=(\alpha^{1-k}(g)xB_n,t)=\alpha(g)(xB_n,t).$$
So $\sigma\circ g\circ\sigma^{-1}=\alpha(g)$, i.e. the $B$-action and $\Z$-action on $Y$ combine into an action of $L=B\rtimes\Z$. Since vertex stabilizers are clearly finite, the action of $L$ on $Y$ is proper.

\includegraphics[width=18cm,height=10cm]{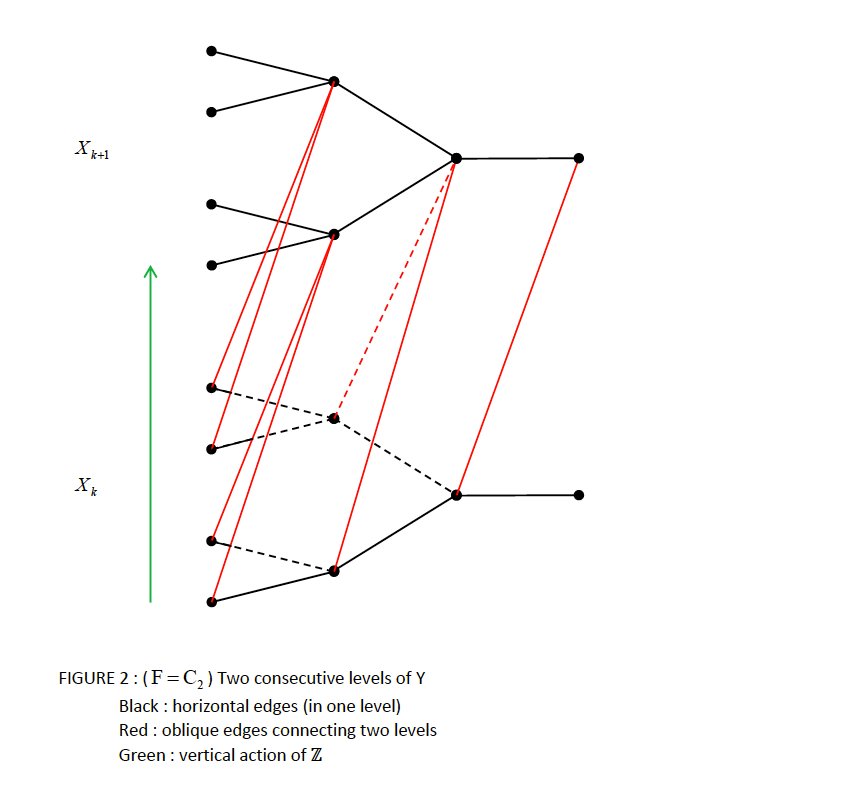}
\includegraphics[width=15cm,height=8cm]{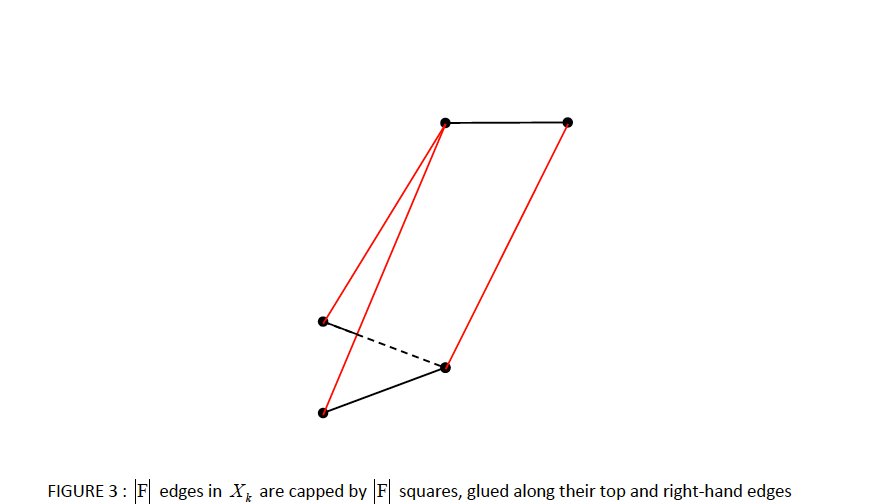}

\begin{Prop}\label{uEL} The space $Y$ is a 2-dimensional model for $\uE L$.
\end{Prop}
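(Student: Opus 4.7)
The plan is to verify the two defining properties of $\uE L$: that $Y$ is a proper $2$-dimensional $L$-CW complex, and that for every finite subgroup $H \leq L$ the fixed-point set $Y^H$ is non-empty and contractible, while $Y^H = \emptyset$ whenever $H$ is infinite.

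The CW structure on $Y$ is the one inherited from the mapping-telescope description: each tree $X_k$ contributes its own vertices and edges, each cylinder $X_k \times [k,k+1]$ adds vertical $1$-cells $\{xB_n\} \times [k,k+1]$, and each edge $e$ of $X_k$ gives a square $2$-cell whose top is glued to $f_k(e)$ in $X_{k+1}$. Since each $X_k$ is $1$-dimensional, $Y$ is $2$-dimensional, and properness of the $L$-action was already recorded in the setup. Since $\Z$ is torsion-free, every finite subgroup $H \leq L$ is contained in $B$; for infinite $H$ we have $Y^H = \emptyset$ by properness, so only finite subgroups $H \leq B$ remain to be handled.

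For such an $H$, the action on level $X_k$ is $h \cdot_k xB_n = \alpha^{-k}(h)\,xB_n$, so $X_k^H$ coincides with the fixed-point set of the finite subgroup $\alpha^{-k}(H) \leq B$ acting on the tree $X = \uE B$ in the usual way; hence $X_k^H$ is a non-empty subtree by the Bruhat--Tits/Serre fixed-point theorem. The $B$-equivariance of $f_k$ gives $f_k(X_k^H) \subseteq X_{k+1}^H$, yielding a bi-infinite sequence of contractible trees connected by equivariant maps. I would then identify $Y^H$ with the mapping telescope $T$ of this sequence: in the open interior of each cylinder $X_k \times (k,k+1)$ only the level-$k$ twisted action is at play so the fixed locus is $X_k^H \times (k,k+1)$, while at a seam $t = k+1$ the gluing relation forces $(xB_n, k+1)$ to lie in $Y^H$ precisely when $f_k(xB_n) \in X_{k+1}^H$, which matches the $H$-fixed slice at that level with a copy of $X_{k+1}^H$ glued to $X_k^H \times \{k+1\}$ via $f_k|_{X_k^H}$.

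Contractibility of $Y^H \cong T$ then follows formally: each $f_k|_{X_k^H}$ is a map between contractible spaces, hence a homotopy equivalence, and splitting $T = T^+ \cup T^-$ into the forward telescope from level $0$ and the backward telescope ending at level $0$, each half deformation retracts onto the contractible intersection $X_0^H$, whence $T$ is contractible by Mayer--Vietoris and van Kampen. I expect the main obstacle to be the bookkeeping at the seams $t=k+1$: one must confirm that the identifications built into the definition of $Y$ do not produce $H$-fixed points beyond those of $X_{k+1}^H$, and that the two twisted actions on either side of a seam glue to give exactly the mapping-telescope description of $Y^H$. Once this combinatorial verification is in place, both properness for infinite $H$ and contractibility for finite $H$ drop out cleanly.
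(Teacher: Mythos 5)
Your reduction to finite $H\le B$ (via torsion-freeness of $\Z$ and properness), the identification of $Y^H$ with the mapping telescope of the contractible subtrees $X_k^H$ along the maps $f_k|_{X_k^H}$, and the seam bookkeeping are all correct and coincide with the paper's argument (which follows Fluch). The gap is in the final contractibility step: you assert that the forward half $T^+$, the telescope of $X_0^H\to X_1^H\to\cdots$, deformation retracts onto $X_0^H$. A mapping telescope deformation retracts toward its \emph{target} end, not its source: the finite subtelescope over $[0,k]$ retracts onto $X_k^H$, not onto $X_0^H$, and the infinite forward telescope has no terminal stage to retract to (compare the telescope of $S^1\xrightarrow{2}S^1\xrightarrow{2}\cdots$, which is not homotopy equivalent to its initial circle). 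That $T^+$ is nevertheless contractible is true, but it needs an argument: any map $S^n\to T^+$ has compact image, hence lands in a finite subtelescope, which retracts forward onto the contractible $X_k^H$ for $k\gg 0$; so every $\pi_n(T^+)$ vanishes and Whitehead applies since $T^+$ is a CW-complex. This is exactly the argument the paper runs directly on all of $Y^H$, which also renders your splitting $T=T^+\cup T^-$ and the Mayer--Vietoris/van Kampen step unnecessary. Once the unjustified retraction of $T^+$ is replaced by this compactness argument, your proof closes up and is essentially the paper's.
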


\begin{proof} Let $H$ be a subgroup of $L$. We must look at the fixed point set $Y^H$ and prove that it is empty if $H$ is infinite, and contractible if $H$ is finite. We consider two cases.
\begin{itemize}
\item $H$ is not contained in $B$ (implying that $H$ is infinite): then $Y^H=\emptyset$.
\item $H$ is contained in $B$. Then a point $(xB_n,t)\in\sigma^k(D)$ is $H$-fixed if and only if $H\subset \alpha^k(xB_n x^{-1})$. This already implies that $Y^H=\emptyset$ if $H$ is infinite.
Let $H$ be finite then. We follow Fluch's proof.  As $Y^H$ is a CW-complex, Whitehead Theorem implies that it is enough to show that it is weakly contractible, i.e. $\pi_n(Y^H)$ is trivial for every $n\geq 0$. Now $X^H$ is a subtree of $X$, hence is contractible, and so is $X_k^H=X^{\alpha^k(H)}$ for every $k\in\Z$.
 Hence, $Y^H$ is a mapping telescope of a collection of contractible spaces $X^H_k$. Now, every representative $S^n\rightarrow Y^H$ of an element on $\pi_n(Y^H)$ is contained, by compactness, in a finite subtelescope of $Y^H$, which retracts by deformation in $X_k^H$ for $k\gg 0$. Hence, $\pi_n(Y^H)$ is trivial and $Y^H$ weakly contractible, thus contractible.
\end{itemize}


\end{proof}

It follows from Proposition 2.3 in \cite{SV07} that $L=F\wr\Z$ admits a proper isometric action on a product of 2 trees (which is therefore a CAT(0) square-complex), but the example in Proposition \ref{uEL} is better adapted to the semi-direct product decomposition $L=B\rtimes\Z$.

\subsection{Computations on the left-hand side}

In this section we will compute Bredon homology with coefficients in the complex representation ring $R_{\mathbb{C}}$ of the infinite cyclic (split) extensions of countable locally finite of the form $G=B\rtimes \mathbb{Z}$, with $B$ countable locally finite. Our main tool will be Mart\'{i}nez spectral sequence, introduced in Section \ref{Martinez}, that we will use to compute Bredon homology of the extension $B\rightarrow G\rightarrow \mathbb{Z}$ with coefficients in the complex representation ring. In the notation of that section, $N=B$ is the countable locally finite group and $\tilde{G}$ are the integers. Again $\mathfrak{F}$ will be the family of finite subgroups of $G$, and $\tilde{\mathfrak{h}}$ the family of finite subgroups of $\mathbb{Z}$, that consists only in the trivial group. Observe that in this case the preimage of $\tilde{\mathfrak{h}}$ has only one element, identified with $B$. Now, in the notation above the $E_2$-page of the sequence is:

$$E_{p,q}^2=H_p(\mathbb{Z};H_q^{\mathfrak{F}_B}(B;R_{\mathbb{C}})).$$

Let us compute all the modules in the $E_2$-page.  First recall that the proper geometric dimension of $\mathbb{Z}$ is 1, as the real line is a minimal dimension model for $\underline{E}\mathbb{Z}$. Hence, $E_{p,q}^2$ should be trivial if $p\geq 2$. Moreover, by Theorem \ref{Mislin2}, Bredon homology of $B$ with coefficients in $R_{\mathbb{C}}$ is only non-trivial in dimension zero, and this implies that $E_{p,q}^2=0$ if $q\geq 1$. Hence, the page $E^2$ is concentrated in the vertical axis, and the spectral sequence collapses in this page, $E_{p,q}^2=E_{p,q}^{\infty}$. Hence we only need to compute $E_{0,0}^2$ and $E_{1,0}^2$, which in turn will be respectively isomorphic to $H_0^{\mathcal{F}}(G;R_{\mathbb{C}})$ and $H_1^{\mathcal{F}}(G;R_{\mathbb{C}})$.

\begin{Prop}
\label{invariants}
In the previous notation, $H_0^{\mathcal{F}}(G;R_{\mathbb{C}})$ is the space of co-invariants of the action of $\mathbb{Z}$ on $H_0^{\mathcal{F}_B}(B;R_{\mathbb{C}})$, and $H_1^{\mathcal{F}}(G;R_{\mathbb{C}})$  is the space of invariants. 

\end{Prop}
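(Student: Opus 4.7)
The plan is to observe that essentially all the work has already been done in the preceding discussion: the Mart\'{i}nez spectral sequence has been set up, with
$$E_{p,q}^2 = H_p(\mathbb{Z}; H_q^{\mathcal{F}_B}(B; R_{\mathbb{C}})),$$
converging to $H_{p+q}^{\mathcal{F}}(G; R_{\mathbb{C}})$, and it has been argued to collapse on the $E^2$-page. Specifically, $E^2_{p,q} = 0$ for $p \geq 2$ because $\mathbb{Z}$ has proper geometric dimension one (the real line is a model for $\underline{E}\mathbb{Z}$), while $E^2_{p,q} = 0$ for $q \geq 1$ because $B$ is countable locally finite so, as explained in the proof of Proposition \ref{freeabelian1}, Theorem \ref{Mislin1} applied to the tree $\uE B$ yields $H_q^{\mathcal{F}_B}(B; R_{\mathbb{C}}) = 0$ for all $q \geq 1$. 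Consequently the only possibly nontrivial entries lie on the row $q=0$ with $p \in \{0,1\}$, so
$$H_p^{\mathcal{F}}(G; R_{\mathbb{C}}) \;\cong\; E_{p,0}^{\infty} \;=\; E_{p,0}^{2} \;=\; H_p\bigl(\mathbb{Z};\, H_0^{\mathcal{F}_B}(B;R_{\mathbb{C}})\bigr) \qquad (p=0,1).$$

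The remaining task is therefore purely a standard computation of group homology of $\mathbb{Z}$ with coefficients in a $\mathbb{Z}$-module $M := H_0^{\mathcal{F}_B}(B; R_{\mathbb{C}})$. Using the free resolution
$$0 \longrightarrow \mathbb{Z}[\mathbb{Z}] \stackrel{t-1}{\longrightarrow} \mathbb{Z}[\mathbb{Z}] \longrightarrow \mathbb{Z} \longrightarrow 0$$
(with $t$ a generator of $\mathbb{Z}$) and tensoring over $\mathbb{Z}[\mathbb{Z}]$ with $M$, one obtains the two-term complex $0 \to M \stackrel{t-1}{\to} M \to 0$, whose homology is $H_0(\mathbb{Z}; M) = M/(t-1)M = M_{\mathbb{Z}}$ and $H_1(\mathbb{Z}; M) = \ker(t-1) = M^{\mathbb{Z}}$. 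Substituting $M = H_0^{\mathcal{F}_B}(B; R_{\mathbb{C}})$ yields exactly the statement.

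The only point worth double-checking, and which I would flag as the sole delicate issue, is to verify that the $\mathbb{Z}$-module structure on $H_0^{\mathcal{F}_B}(B; R_{\mathbb{C}})$ arising from the Mart\'{i}nez spectral sequence (i.e.\ from the equivalence $G\text{-}\mathrm{Mod}_{\mathfrak{h}} \simeq \tilde{G}\text{-}\mathrm{Mod}_{\tilde{\mathfrak{h}}}$ described in Section \ref{Martinez}) agrees with the one induced by the conjugation action of $\mathbb{Z}$ on $B$ in the semi-direct product $G = B \rtimes \mathbb{Z}$, which on the representation ring of finite subgroups of $B$ is just the shift. This is the standard compatibility for extensions that makes the Mart\'{i}nez sequence the analogue of Lyndon-Hochschild-Serre, and it follows formally from the construction; I would simply invoke it rather than reprove it.
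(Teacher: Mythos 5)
Your proposal is correct and follows essentially the same route as the paper: both rely on the collapse of the Mart\'{i}nez spectral sequence established in the preceding discussion and then identify $H_0(\mathbb{Z};M)$ and $H_1(\mathbb{Z};M)$ with co-invariants and invariants, the only difference being that you write out the standard two-term resolution of $\mathbb{Z}$ over $\mathbb{Z}[\mathbb{Z}]$ explicitly where the paper simply cites Example 1 on page 58 of \cite{Bro82} (which is that very computation). Your flag about the compatibility of the $\mathbb{Z}$-module structure with the shift action is a reasonable point of care, and the paper likewise treats it as implicit in the construction of the spectral sequence.
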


\begin{proof}

As stated above, $H_0^{\mathcal{F}}(G;R_{\mathbb{C}})$ and $H_1^{\mathcal{F}}(G;R_{\mathbb{C}})$ are respectively isomorphic to  $H_0(\mathbb{Z};H_0^{\mathfrak{F}_B}(B;R_{\mathbb{C}}))$ and $H_1(\mathbb{Z};H_0^{\mathfrak{F}_B}(B;R_{\mathbb{C}}))$, and the statements about co-invariants and invariants are a consequence of \cite{Bro82} (Example 1 in page 58), as we are dealing here with ordinary homology with coefficients.
\end{proof}

Now we can particularise our result for lamplighter groups.

\begin{Prop}\label{topolside}
If $L=F\wr\Z$ is the lamplighter group over the (non-trivial) finite group $F$, then $H_0^{\mathcal{F}}(L;R_{\C})$ is a free abelian group with a countable base indexed by the orbits of $\Z$ on $\hat{F}^{(\Z)}$, and $H_1^{\mathcal{F}}(L;R_{\mathbb{C}})=\mathbb{Z}$ is identified with the infinite cyclic subgroup in $\Z(\hat{F}^{(\Z)})$ generated by the base-point in $\hat{F}^{(\Z)}$. Moreover, these groups are respectively isomorphic to $ K_0^L(\underline{E}L)$ and $ K_1^L(\underline{E}L)$.
\end{Prop}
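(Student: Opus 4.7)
The plan is to feed the conclusions of the Mart{\'i}nez spectral sequence (already established just before the statement) into the invariant/co-invariant description, and then transport the Bredon computation to $K$-homology via Mislin's Theorem \ref{Mislin2}. I will build the argument in four short steps.

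First, by Proposition \ref{invariants} together with Theorem \ref{Mislin2} applied to $B$ (whose $\uE B$ is the tree described in Section \ref{K-locfin}, so of dimension~$1$), I get a natural identification
\[
H_0^{\mathcal{F}_B}(B;R_\C)\;\simeq\;K_0^B(\uE B)\;\simeq\;\Z(\hat F^{(\Z)}),
\]
the last isomorphism being Corollary \ref{K0locfin}(1). The next point to verify is that, through this chain of isomorphisms, the induced action of $\Z=L/B$ on $H_0^{\mathcal{F}_B}(B;R_\C)$ corresponds precisely to the shift action $\alpha$ on $\hat F^{(\Z)}$. This is the only genuinely bookkeeping step: the $\Z$-action on Bredon homology is induced by the conjugation action of $\Z$ on the tree $\uE B$ (twisted levels $X_k$), and under the natural identification of $H_0^{\mathcal{F}_B}(B;R_\C)$ with external tensor products $\otimes_{k\in\operatorname{supp}\pi}\pi_k$ (Corollary \ref{K0locfin}(1)), conjugation by the generator of $\Z$ is exactly the index shift on the factors.

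Second, I apply Lemma \ref{inv/coinv} to $X=\hat F^{(\Z)}$ with the shift action. The co-invariants are $\Z(\Z\backslash\hat F^{(\Z)})$, a free abelian group based on the set of $\Z$-orbits, which is countable. The invariants are $\Z Y$ where $Y\subset\hat F^{(\Z)}$ is the set of elements with finite $\Z$-orbit; but a non-zero element $\pi\in\hat F^{(\Z)}$ has finite non-empty support, and shifting strictly moves this support, so its $\Z$-orbit is infinite. Hence $Y$ consists solely of the base-point (the constant map $1_F$), and $\Z Y=\Z$. Combining with Proposition \ref{invariants}, this gives the stated descriptions of $H_0^{\mathcal{F}}(L;R_\C)$ and $H_1^{\mathcal{F}}(L;R_\C)$.

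Third, to pass from Bredon homology to equivariant $K$-homology I invoke Theorem \ref{Mislin2}, which applies because Proposition \ref{uEL} exhibits a $2$-dimensional model for $\uE L$. I still need to know that $H_2^{\mathcal{F}}(L;R_\C)=0$ in order to conclude that the short exact sequence of Theorem \ref{Mislin2} collapses to an isomorphism $H_0^{\mathcal{F}}(L;R_\C)\simeq K_0^L(\uE L)$. But this vanishing has already been recorded: the Mart{\'i}nez spectral sequence computed at the start of this subsection is concentrated on the column $p\in\{0,1\}$ and the row $q=0$, so $H_n^{\mathcal{F}}(L;R_\C)=0$ for every $n\ge 2$. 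The isomorphism $H_1^{\mathcal{F}}(L;R_\C)\simeq K_1^L(\uE L)$ is then the one given directly by Theorem \ref{Mislin2}.

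The only step I expect to require real care is the identification of the induced $\Z$-action in step one with the shift on $\hat F^{(\Z)}$; everything else is a matter of assembling previously proved lemmas. Once that naturality is in place, the free abelian structure of $H_0$ and the infinite cyclic structure of $H_1$ follow immediately, and Mislin's theorem transfers the result to $K_\ast^L(\uE L)$.
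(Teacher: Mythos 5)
Your proposal is correct and follows essentially the same route as the paper's proof: the Mart\'inez spectral sequence computation (Proposition \ref{invariants}) reduces everything to invariants and co-invariants of the shift on $H_0^{\mathcal{F}_B}(B;R_\C)\simeq\Z(\hat F^{(\Z)})$ via Corollary \ref{K0locfin}, Lemma \ref{inv/coinv} with the unique finite orbit (the base-point) gives the two groups, and Theorem \ref{Mislin2} together with the vanishing of $H_2^{\mathcal{F}}(L;R_\C)$ transfers the result to $K_\ast^L(\uE L)$. The extra care you flag about identifying the induced $\Z$-action with the shift is a point the paper passes over silently, but your verification of it is sound.
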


\begin{proof} By the computations involving Mart\'{i}nez spectral sequence, we have $H_2^{\mathcal{F}}(L;R_{\mathbb{C}})=0$; by Theorem \ref{Mislin2}, we then have $H_i^{\mathcal{F}}(L;R_{\mathbb{C}})\simeq K_i^L(\uE L)$ for $i=0,1$. By Proposition \ref{invariants}, these groups correspond respectively to the co-invariants and invariants of $\Z$ acting on $H_0^{\mathcal{F}}(B;R_{\mathbb{C}})$; and Corollary  \ref{K0locfin} implies that the latter is naturally isomorphic to $\Z(\hat{F}^{(\Z)})$), with $\Z$ acting by shift. The result then follows from Lemma \ref{inv/coinv}, observing that the shift has a unique finite orbit on $\hat{F}^{(\Z)}$, namely the fixed point provided by the base-point $o$.
\end{proof}

\begin{Rem}

It is interesting to point out here another result by P. Baum and A. Connes (\cite{BC88}, Proposition 15.2) where, for every discrete countable group $G$, it is constructed a Chern character $Ch_*: K_i^G(\uE G)\rightarrow \bigoplus_{j\geq 0} H_{i+2j}(G,FG)$ where $FG$ is the complex vector space generated by finite order elements in $G$, with $G$ acting by conjugation; $Ch_*$ becomes an isomorphism after tensoring by $\C$. An alternative construction of W. L\"uck (Theorem 0.7 in \cite{Luc02}) provides, after inverting orders of finite subgroups of $G$, an identification between $K_i^G(\uE G)$ and the direct sum, over conjugacy classes of finite cyclic subgroups $C$ of $G$, of abelian groups which are basically (non-equivariant) K-homology groups of the fixed point point set $(\uE G)^C$. All this makes $K_i^G(\uE G)$ computable rationally, and this can be used as a test case before undertaking computations with the spectral sequence, which are usually more complicated. See also the discussion in (\cite{MV03}, I.3.25).

\end{Rem}

\subsection{Computations on the right-hand side}
In this section we present the computations for the \emph{K}-groups  $K_i(C^*L)$, $i=0, 1$,  for $L=B\rtimes \Z $, and $B=F^{(\Z)}$, $F$ a (non-trivial) finite group. Recall that for every amenable group $G$, we have $C^*G=C^*_rG$.

Let $A$ be a unital $C^*$-algebra on which $\Z$ acts via $\alpha \in Aut(A)$. We may construct $A\rtimes_{\alpha}\Z$, with $u \in A\rtimes_{\alpha}\Z$ be the unitary which implements the action in this construction. Abstractly,
$A\rtimes_{\alpha}\Z$ is the universal $C^*$-algebra generated by $A$ and $u$ corresponding to the relation $\alpha(a)=uau^{-1}$ for $a\in A$. In our case  $C^*(L) = C^*(B)\rtimes_{\alpha} \Z $, where $\alpha$ denotes the shift on $C^*B$.

A very convenient tool for computing the \emph{K}-theory of crossed products by $\Z$ is the Pimsner-Voiculescu 6-term exact sequence (P-V), see Theorem 2.4 in \cite{PV80}:
\begin{equation*}
\begin{array}{ccccc}
K_0(A) 	& \stackrel{Id-\alpha_*}{\longrightarrow} & K_0(A) & \stackrel{\iota_*}{\longrightarrow} & K_0(A\rtimes_{\alpha} {\mathbb Z}) \\
& & & &  \\
 \partial_1\uparrow &   &   &   & \downarrow \partial_0\\
 & & & &  \\
K_1(A\rtimes_{\alpha} {\mathbb Z}) & \stackrel{\iota_*}{\longleftarrow} & K_1(A) & \stackrel{Id-\alpha_*}{\longleftarrow} & K_1(A)
\end{array}
\end{equation*}
Let $[u]\in K_1(A\rtimes_\alpha\Z)$ be the \emph{K}-theory class of the basic unitary $u\in A\rtimes\Z$ generating the $\Z$-action on $A$.

\begin{Thm} {\label {K-theory for $C^*L$}}
	Let $L = B\rtimes_{\alpha} \Z$, with $B=\oplus_\Z F$, $F$ a non-trivial finite group. Then $K_0(C^*L)$ identifies with the free abelian group on the orbit space of the shift on $Min(F)^{(\Z)}$, and $K_1(C^*L)$ is the infinite cyclic group generated by $[u]$.
\end{Thm}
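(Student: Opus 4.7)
The plan is to apply the Pimsner-Voiculescu (P-V) six-term exact sequence displayed in the text to $A = C^*B$ with $\alpha$ the shift, and feed it the computation of $K_*(C^*B)$ from Corollary~\ref{K0locfin}. The input is as follows. Since $B$ is locally finite, written as an increasing union $B = \bigcup_{n} B_n$ of finite subgroups, $C^*B$ is the inductive limit of the finite-dimensional $C^*$-algebras $\C B_n$, hence is AF, and in particular $K_1(C^*B) = 0$. By Corollary~\ref{K0locfin}(2), $K_0(C^*B) \simeq \Z(Min(F)^{(\Z)})$, and under this identification the automorphism of $K_0(C^*B)$ induced by $\alpha$ is precisely the shift action on $Min(F)^{(\Z)}$.

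With $K_1(C^*B) = 0$, the P-V sequence collapses to the short exact sequence
$$0 \to K_1(C^*L) \stackrel{\partial_1}{\longrightarrow} K_0(C^*B) \stackrel{Id - \alpha_*}{\longrightarrow} K_0(C^*B) \stackrel{\iota_*}{\longrightarrow} K_0(C^*L) \to 0,$$
identifying $K_0(C^*L)$ with the $\Z$-coinvariants of $K_0(C^*B)$ and $K_1(C^*L)$ with the $\Z$-invariants. Applying Lemma~\ref{inv/coinv} to the countable $\Z$-set $Min(F)^{(\Z)}$ with the shift action, the coinvariants form the free abelian group on the orbit space $\Z\backslash Min(F)^{(\Z)}$, giving the first claim. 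For the invariants, the only finite orbit of the shift on $Min(F)^{(\Z)}$ is the fixed point provided by the constant map with value $p_F$, so the invariants form the infinite cyclic group generated by this fixed point, which by Corollary~\ref{K0locfin}(2) corresponds to $[1_{C^*B}] \in K_0(C^*B)$.

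It remains to identify a generator of $K_1(C^*L) \simeq \Z$ with the class $[u]$; this is the step requiring the most care, since Lemma~\ref{inv/coinv} by itself produces an abstract generator of the invariants and does not yet speak to $[u]$. For this I would invoke the standard Toeplitz-extension computation underlying P-V, which shows that for a unital $C^*$-dynamical system $(A,\Z,\alpha)$ the boundary map $\partial_1:K_1(A\rtimes_\alpha\Z)\to K_0(A)$ sends the implementing unitary $[u]$ to $\pm [1_A]$. Since $\partial_1$ embeds $K_1(C^*L)$ onto $\Z\cdot [1_{C^*B}]$ by the previous paragraph, it follows that $[u]$ generates $K_1(C^*L)$, completing the proof.
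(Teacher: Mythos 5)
Your proposal is correct and follows essentially the same route as the paper: feed the AF-algebra computation $K_1(C^*B)=0$ and the $\alpha$-equivariant identification $K_0(C^*B)\simeq \Z(Min(F)^{(\Z)})$ from Corollary \ref{K0locfin} into the Pimsner--Voiculescu sequence, read off $K_0(C^*L)$ as coinvariants via Lemma \ref{inv/coinv} and $K_1(C^*L)$ as invariants, and pin down $[u]$ as a generator via $\partial_1[u]=-[1]$ (the paper cites Lemma 2 of \cite{PV16} for this, which is exactly the Toeplitz-extension fact you invoke).
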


\begin{proof}
 We plug $C^*B$ and $C^*L$ in the P-V exact sequence; by Corollary \ref{K0locfin}, $K_0(C^*B)$ is identified, $\alpha$-equivariantly, with $\Z((Min(F))^{(\Z)})$. Since $C^*B$ is an AF-algebra, its $K_1$-group is zero.
\begin{equation*}
\begin{array}{ccccc}
	\Z((Min(F))^{(\Z)})	& \stackrel{Id- \alpha _*}{\longrightarrow} & \Z((Min(F))^{(\Z)}) & \stackrel{\iota_*}{\longrightarrow} & K_0( C^*(L)) \\ \partial_1\uparrow &   &   &   & \downarrow \\
	K_1( C^*L) & {\longleftarrow} & 0 & {\longleftarrow} & 0
\end{array}
\end{equation*}
By injectivity of $\partial_1$ and exactness of the sequence $K_1(C^*L)\simeq Ker (Id- \alpha_*) = \Z.p_F$, where $p_F$ denotes the constant map taking the value $p_F$ on $\Z$; it corresponds to the class $[1]$ of 1 in $K_0(C^*B)$ (see Corollary \ref{K0locfin}). Moreover we know from Lemma 2 in \cite{PV16} that $\partial_1[u] = -[1]$. Hence $K_1(C^*L)= \Z.[u]$. In order to compute $K_0(C^*L)$, we focus on the right side of the diagram. Surjectivity of $\iota_*$ implies that $K_0(C^*L)\simeq \Z((Min(F))^{(\Z)})/ {Ker{\iota_*}}$. By exactness of the diagram we get $Ker{\iota_*}= Im (Id-\alpha_*)$. Hence $K_0(C^*L)$ is the set of co-invariants of the shift in $\Z((Min(F))^{(\Z)})$, which is described by Lemma \ref{inv/coinv} and Lemma \ref{repres}.
\end{proof}

\subsection{The Baum-Connes conjecture and the trace conjecture}

Now we will describe explicitly Baum-Connes isomorphism.

\begin{Thm}\label{BCforL} Let $L=F\wr\Z$ be the lamplighter group over some (non-trivial) finite group $F$.
\begin{enumerate}
\item[a)] The map $\mu_0^L:K_0^L(\uE L)\ra K_0(C^*L)$ is an isomorphism between two free abelian groups of countably infinite rank.
\item[b)] The map $\mu_1^L:K_1^L(\uE L)\ra K_1(C^*L)$ is an isomorphism between two infinite cyclic groups. More precisely, denoting by $i:\Z\rightarrow L$ the inclusion, we have a commuting diagram of isomorphisms:
$$\begin{array}{ccc}\Z=K_1^\Z(\uE Z) & \stackrel{\mu_1^\Z}{\rightarrow} & K_1(C^*\Z)=\Z \\i_*\downarrow &  & \downarrow i_* \\K_1^L(\uE L) & \stackrel{\mu_1^L}{\rightarrow} & K_1(C^*L)\end{array}$$
\end{enumerate}
\end{Thm}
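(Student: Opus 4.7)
The plan is to combine the two four-term exact sequences already at our disposal --- the Pimsner--Voiculescu sequence underlying the proof of Theorem \ref{K-theory for $C^*L$} on the analytical side, and the Mart\'inez spectral sequence computation from Proposition \ref{invariants} on the topological side --- and to deduce the isomorphism by naturality of the assembly map together with the five lemma, using the Baum--Connes isomorphism $\mu_B:K_0^B(\uE B)\to K_0(C^*B)$ of Section \ref{K-locfin} as input.

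Concretely, on the analytical side the P--V sequence degenerates (since $K_1(C^*B)=0$) to
\begin{equation*}
0 \to K_1(C^*L) \to K_0(C^*B) \xrightarrow{Id - \alpha_*} K_0(C^*B) \to K_0(C^*L) \to 0.
\end{equation*}
On the topological side, Proposition \ref{invariants} gives $K_1^L(\uE L)\simeq K_0^B(\uE B)^\Z$ and $K_0^L(\uE L)\simeq K_0^B(\uE B)_\Z$; since for any $\Z$-module $M$ the kernel and cokernel of $Id - t:M\to M$ are respectively $M^\Z$ and $M_\Z$, this packages into the parallel exact sequence
\begin{equation*}
0 \to K_1^L(\uE L) \to K_0^B(\uE B) \xrightarrow{Id - \alpha_*} K_0^B(\uE B) \to K_0^L(\uE L) \to 0.
\end{equation*}
Naturality of the assembly map with respect to the decomposition $L=B\rtimes\Z$ produces a commutative ladder between these two sequences whose two middle vertical arrows are $\mu_B$, which is an isomorphism by Section \ref{K-locfin} and is $\Z$-equivariant by Corollary \ref{K0locfin}. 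The five lemma yields that $\mu_0^L$ and $\mu_1^L$ are isomorphisms, proving the bulk of (a) and (b).

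For the commuting square in (b), I would trace generators through these identifications. By Proposition \ref{topolside}, $K_1^L(\uE L)$ is generated by the base-point of $\hat F^{(\Z)}$, i.e.\ the constant function $1_F$; under $\mu_B$, Corollary \ref{K0locfin}(2) sends this to the class $[1]\in K_0(C^*B)$. On the analytical side, the proof of Theorem \ref{K-theory for $C^*L$} uses $\partial_1[u]=-[1]$ from \cite{PV16} to conclude that $[u]$ generates $K_1(C^*L)$. Naturality of the assembly map for the inclusion $i:\Z\hookrightarrow L$ then gives the commuting square: $i_*$ on $K_1^\Z(\uE\Z)=\Z$ sends the canonical generator to the base-point of $\hat F^{(\Z)}$ (since the trivial representation of the trivial subgroup extends to $1_F$), while $i_*:K_1(C^*\Z)\to K_1(C^*L)$ sends the canonical generator to $[u]$.

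The main technical obstacle is to justify that the topological four-term sequence produced above is honestly compatible, via $\mu$, with the Pimsner--Voiculescu sequence on the analytical side. One route is to invoke the general naturality of Baum--Connes for group extensions; a more elementary one is to observe that both sequences arise from a Lyndon--Hochschild--Serre type exact sequence for $B\rtimes\Z$ applied respectively to $K_*^?(\uE-)$ and $K_*(C^*-)$, and that on the middle $K_0$-groups the shift action is literally intertwined by $\mu_B$ --- which is precisely the $\Z$-equivariance statement in Corollary \ref{K0locfin}. Alternatively, one can verify the identification directly on bases, using that the natural bijection $\hat F \leftrightarrow Min(F)$ induced by $\mu_0^F$ for the finite group $F$ descends to a bijection between $\Z$-orbit sets on $\hat F^{(\Z)}$ and $Min(F)^{(\Z)}$, which are the indexing sets for the bases of the two $K_0$-groups.
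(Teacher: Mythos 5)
Your treatment of $\mu_0^L$ coincides with the paper's: one restricts to the cokernel portions of the two four-term sequences, observes that the two relevant squares commute by functoriality of the assembly map applied to the group homomorphisms $\alpha\colon B\to B$ and $\iota\colon B\to L$, and concludes by the five lemma using that $\mu_0^B$ is an isomorphism. That part is correct and is essentially the paper's argument for (a).

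For $\mu_1^L$ your primary route has a genuine gap, and it is exactly the one you flag yourself. Applying the five lemma to the \emph{full} four-term ladder requires commutativity of the leftmost square, whose horizontal arrows are the Pimsner--Voiculescu boundary $\partial_1\colon K_1(C^*L)\to K_0(C^*B)$ on the analytic side and the inclusion of the invariants on the topological side. Neither map is induced by a group homomorphism, so ``functoriality of the assembly map'' does not apply; compatibility of $\mu$ with the PV boundary is a substantive statement (essentially compatibility of Baum--Connes with the Wang sequence of $B\rtimes\Z$), and none of your three suggested remedies closes it --- the second and third only address the middle $K_0$-squares, and the first is an appeal to a general principle you do not make precise. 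The paper avoids this entirely: it uses the retraction $p\colon L\to\Z$ to see that $i_*(t)\neq 0$ in the infinite cyclic group $K_1^L(\uE L)$, writes $i_*(t)=k\cdot u$ with $k\neq 0$, and then uses the fact (already contained in Theorem \ref{K-theory for $C^*L$}) that $i_*\colon K_1(C^*\Z)\to K_1(C^*L)$ carries the generator to the generator $[u]$; commutativity of the square for the honest group homomorphism $i\colon\Z\to L$ then forces $k=\pm 1$, so both $i_*$ and $\mu_1^L$ are isomorphisms. Your secondary, generator-tracing route is close to this, but it asserts that $i_*(t)$ is \emph{exactly} the base-point generator of $K_1^L(\uE L)$ on the strength of one parenthetical remark; justifying that requires tracking the edge maps of the Mart\'inez spectral sequence under $i$, which you do not do, whereas the paper's argument only ever needs $i_*(t)\neq 0$.
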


\begin{proof}
\begin{enumerate}
\item Case of $\mu_0^L$. We already know that $K_0^L(\uE L),K_0(C^*L)$ are free abelian on countably many generators. Let $\iota$ denote inclusion of $B$ into $L$. Consider the following diagram:
$$\begin{array}{ccccccc}
K_0^B(\uE B) & \stackrel{Id-\alpha_*}{\ra} & K_0^B(\uE B) & \stackrel{\iota_*}{\ra} & K_0^L(\uE L) & \ra & 0 \\
 & & & & & & \\
\mu_0^B\downarrow & & \mu_0^B\downarrow & & \mu_0^L\downarrow & & \\
 & & & & & & \\
K_0(C^*B) & \stackrel{Id-\alpha_*}{\ra} & K_0(C^*B) & \stackrel{\iota_*}{\ra} & K_0(C^*L) & \ra & 0
\end{array}$$
This diagram commutes, by functoriality of the assembly map (see \cite{MV03}, Corollary II.1.3). By Proposition \ref{topolside} and his proof (resp. by Theorem \ref{K-theory for $C^*L$} and its proof), the top line (resp. bottom line) is exact. Since $\mu_0^B$ is an isomorphism (as we already mentioned), $\mu_0^L$ is an isomorphism too, by the suitable version of the Five Lemma.

\item Case of $\mu_1^L$.  The diagram commutes, again by functoriality of the assembly map. We must show that the left hand-side vertical and bottom horizontal maps are isomorphisms. Let $p:L\ra\Z$ be the quotient map, and let $t$ be the generator of $K_1^\Z(\uE \Z)\simeq\Z$ as described in \cite{MV03}, section II.4.2. We know by Proposition \ref{topolside} that $K_1^L(\uE L)$ is infinite cyclic. Denote by $u$ any of its two generators. Since $p_*\circ i_*$ is the identity of $K_1^\Z(\uE \Z)$, we know that $i_*(t)$ is non-zero in $K_1^L(\uE L)$. Let $k$ be the non-zero integer such that $i_*(t)=k.u$. By commutativity of the diagram we have:
$$k.\mu_1^L(u)=\mu_1^L( i_*(t))=i_*(\mu_1^\Z(t)).$$
But $\mu_1^\Z(t)$ is the canonical generator of $K_1(C^*(\Z))=K_1(C(S^1))=\Z$ (see \cite{MV03}, Proposition II.4.1), and by Theorem \ref{K-theory for $C^*L$} the inclusion $i:C^*(\Z)\ra C^*(L)$ induces an isomorphism at the level of $K_1$. So $i_*(\mu_1^\Z(t))$ is a generator of $K_1(C^*(L))=\Z$, which implies $k=\pm 1$ and both $i_*$ and $\mu_1^L$ are isomorphisms.
\end{enumerate}

\end{proof}
 Now we present the proof of the trace conjecture for $C^*(L)$.

 \begin{Prop}\label{trace}
 	For $L= F \wr \Z$, 	Let $\tau : C^*L \rightarrow \mathbb C$ be the canonical trace. The induced homomorphism
 	$\tau _* : K_0( C^*L) \rightarrow \mathbb R$
 	at the \emph{K}-theory level has  $\Z[\frac {1}{|F|}]$ as its image.
 \end{Prop}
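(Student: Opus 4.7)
The plan is to reduce the computation to $K_0(C^*B)$ and then evaluate the trace on the explicit basis provided by Corollary \ref{K0locfin}.

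First, I would observe that $L$ is amenable, so $C^*L=C^*_rL$ carries its canonical faithful tracial state $\tau$, whose restriction to $C^*B$ coincides with the canonical trace $\tau_B$ on $C^*B$. From the P-V sequence used in the proof of Theorem \ref{K-theory for $C^*L$}, the map $\iota_*: K_0(C^*B) \to K_0(C^*L)$ is surjective, and $\tau_* \circ \iota_* = (\tau_B)_*$. Consequently $\operatorname{Im}\tau_* = \operatorname{Im}(\tau_B)_*$, which reduces the problem to computing the image of the trace on $K_0(C^*B)$.

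Next, I would recall that on each factor $\C F \cong \bigoplus_{\pi\in\hat F} M_{\dim\pi}(\C)$, the group-theoretic trace (given by $\tau(g) = \delta_{g,e}$) satisfies $\tau(e_\pi) = \frac{\dim\pi}{|F|}$ on every minimal projection $e_\pi$ of the block $M_{\dim\pi}(\C)$, and $\tau(p_F) = \frac{1}{|F|}$ for the trivial-representation projection $p_F$. Since $\tau_B$ is the infinite tensor product of these states (and the inclusion $\C(\oplus_{k\in\mathrm{supp}\,e}F_k)\hookrightarrow C^*B$ is unital on the generated subalgebra), for any $e\in (\mathrm{Min}\,F)^{(\Z)}$ with associated irreducible representations $\pi_k$, the representative projection $\otimes_{k\in\mathrm{supp}\,e}e_k$ from Corollary \ref{K0locfin} has trace
\[
(\tau_B)_*([e]) \;=\; \prod_{k\in\mathrm{supp}\,e}\frac{\dim\pi_k}{|F|} \;\in\; \Z\!\left[\tfrac{1}{|F|}\right].
\]

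Finally, to identify the image precisely, I would observe two things. Every such fraction lies in $\Z[\tfrac{1}{|F|}]$ because the denominator is a power of $|F|$, giving $\operatorname{Im}\tau_*\subseteq\Z[\tfrac{1}{|F|}]$. For the reverse inclusion, choosing $e_k = p_F$ at $n$ distinct points of $\Z$ yields the value $\frac{1}{|F|^n}$ in the image for every $n\geq 0$; together with $\tau_*([1]) = 1$, these values generate $\Z[\tfrac{1}{|F|}]$ as an additive subgroup of $\R$. Putting the two inclusions together gives $\operatorname{Im}\tau_* = \Z[\tfrac{1}{|F|}]$.

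The argument is essentially bookkeeping once the generators of $K_0(C^*B)$ are in hand; the only subtlety worth spelling out is the verification that the trace value on the class $[\otimes_{k\in\mathrm{supp}\,e}e_k]$ really is the product of the local traces, which follows because the canonical trace on the group algebra of a direct sum of finite groups factors as the tensor product of the canonical traces on each factor.
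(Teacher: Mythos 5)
Your argument is correct and follows essentially the same route as the paper: both reduce to $K_0(C^*B)$ via surjectivity of $\iota_*$ and then use that a minimal projection in the $\pi$-block of a finite group algebra has trace $\frac{\dim\pi}{|H|}$ while $\tau(p_H)=\frac{1}{|H|}$ (the paper merely packages this as the inductive limit $\varinjlim\left(\frac{1}{|F|^n}\Z\right)=\Z[\frac{1}{|F|}]$ over the finite subgroups $B_n$ instead of evaluating on the basis of Corollary \ref{K0locfin}). One small notational point: since in the paper's convention $\mathrm{supp}\,e$ consists of the places where $e_k\neq p_F$, ``choosing $e_k=p_F$ at $n$ points'' does not name a basis element of trace $|F|^{-n}$ (your displayed product would then be empty and equal to $1$); you should instead invoke the class of the projection $p_{F_{k_1}}\cdots p_{F_{k_n}}\in C^*B$ directly, which is an integer combination of basis elements and does have trace $|F|^{-n}$, so the conclusion stands.
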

\begin{proof}
	
	Let $L = B\rtimes_{\alpha} \Z$, and let $\tau\colon C^*L \rightarrow \mathbb C$ be the canonical trace on $C^*L$. For $B$ the subgroup of $L$, the restriction of $\tau$ on $C^*B$, still denoted by $\tau$, gives the canonical trace on $C^*B$.
	Surjectivity of $\iota _* \colon K_0(C^*B) \rightarrow K_0(C^* L)$, as in Theorem \ref{K-theory for $C^*L$}, implies that $\tau_*(K_0(C^*L)) = \tau_*(K_0(C^* B))$. Hence we need to show $\tau_*(K_0(C^*B)) = \Z[\frac {1}{|F|}] $.
	\\
	View $B$ as $B = \varinjlim {B_n}$ with $B_n$'s finite groups. We obtain the  inductive limit  $C^*B = \varinjlim\C B_n$. Since \emph{K}-theory commutes with  inductive limits, we have $K_0(C^*B) = \varinjlim K_0(\C B_n)$. Similar to before the canonical trace $\tau$ on $C^*B$ restricts to the canonical trace $\tau$ on $\C B_n$'s for  $n\in \N$. Therefore $\tau_* (C^* B)= \varinjlim (\tau_*(K_0(\C B_n)), \iota_n)$, with $\iota_n$'s  inclusions the  $\iota_n\colon \tau_*(K_0(\C B_{n})) \rightarrow \tau_*(K_0(\C B_{n+1}))$ for $n\in \N$.  This way we reduce to finite groups; the trace conjecture for finite groups is of course well-known, we give below a proof for completeness. It immediately implies that $\tau_* (K_0(C^*(L)) = \varinjlim {( \frac{1}{|F|^n} {\Z}, \iota_{n})} = \Z [\frac{1}{|F|}]$.
	
	\medskip
	{\bf Claim:} For a finite group $H$, consider $\tau_*\colon K_0(\C H) \rightarrow \R$. Then $\operatorname{Im} \tau_* =  \frac{1}{|H|} {\Z}$.

	\medskip
	{\bf Proof of the claim:} Consider the minimal projection $p_H = \frac{1}{|H|}\sum_{h\in H}{h}$ associated to the trivial representation of $H$ as in Section \ref{K-locfin}. Since $\tau(p_H) = \frac{1}{|H|}$
	we get that $\frac{1}{|H|}  \Z \subset \tau_*(K_0(\C H))$. To show the converse, for $\pi \in \hat H$ consider the character associated to $\pi$, i.e. $\chi _{\pi} (h) = \operatorname{Tr}(\pi(h))$ for $h\in H$. Consider first $p_{\pi} = \frac{\dim \pi}{|H|} \chi _{\pi}$ the minimal central projection in $\C H$ such that $\C H = \oplus_{\pi \in {\hat H}} p_{\pi} \C H \cong \oplus_{\pi \in {\hat H}} M_{\dim {\pi}}(\C)$. They satisfy $\tau (p_{\pi}) = \frac{(\dim \pi )^2}{|H|}$. Now every $p_\pi$ is the sum of $\dim\pi$ minimal projections in $M_{\dim\pi}(\C)$, all with the same trace; hence, the trace of a minimal projection in $\C H$ is $\frac{\dim\pi}{|H|}\in\frac{1}{|H|} \Z.$ This finishes the proof.
	\hfill$\square$

\end{proof}

\section{Applications to full shifts}

Suppose that $F$ is a (non-trivial) finite {\it abelian} group. Then $B=\oplus_\Z F$ is a countable abelian group, so $C^*B$ identifies via Fourier transform with $C(\hat{B})$, where $\hat{B}$ is the Pontryagin dual of $B$, here $\hat{B}=\hat{F}^\Z$, a Cantor set.

Let $Y$ be a Cantor set; it is well-known that $K_0(C(Y))=C(Y,\Z)$, the integer-valued continuous functions on $Y$. Let $\phi$ be a homeomorphism of $Y$, for instance the shift on $\hat{B}$ (called the {\it full shift} in topological dynamics). It was first observed by Putnam in Theorem 1.1 of \cite{Put89} that  $K_1(C(Y)\rtimes_{\phi} \Z) = <[u]>$ and $K_0(C(Y)\rtimes_{\phi} \Z) \cong C(Y, \Z)/ {Im(Id - \phi ^*)}$ , as a consequence of the P-V sequence\footnote{The isomorphism $K_0(C(Y)\rtimes_{\phi} \Z) \cong C(Y, \Z)/ {Im(Id - \phi ^*)}$ is indeed an isomorphism of {\it preordered} groups, by Theorem 5.2 in \cite{BH96}.}. For the full shift it follows from our Theorem \ref{K-theory for $C^*L$}.

It is probably folklore that $C(Y,\Z)$ is a free abelian group on countably many generators. To apply our results to full shifts, we need an explicit identification, so we spell out
Corollary \ref{K0locfin} for $K_0(C^*B)$ in the case where $F$ is abelian.

We may identify $Min(F)$ with $\hat{F}$, the character group of $F$. For $\varepsilon\in\hat{F}^{(\Z)}$, let $\chi_\varepsilon$ be the characteristic function of the clopen set $\{x\in \hat{B}: x_k=\varepsilon_k,\,\mbox{for every}\,k\in\,supp\,\varepsilon\}$. Note that, if $1$ denotes the constant map with value $1$ (the trivial character of $B$), then $\chi_1$ is the constant function 1. The following is then a re-writing of Corollary \ref{K0locfin}:

\begin{Prop}\label{C(Y,Z)free} The map
$$\beta: \Z(\hat{F}^{(\Z)})\rightarrow C(\hat{B},\Z):\sum_{\varepsilon\in\hat{F}^{(\Z)}}a_\varepsilon \varepsilon\mapsto \sum_{\varepsilon\in\hat{F}^{(\Z)}} a_\varepsilon \chi_\varepsilon$$
is a (shift-equivariant) isomorphism.
\hfill$\square$
\end{Prop}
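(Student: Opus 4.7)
My plan is to recognise $\beta$ as nothing but the explicit form of the isomorphism in Corollary \ref{K0locfin}(2), after composing with two standard identifications available when $F$ is abelian: the Fourier transform $C^*B \cong C(\hat B)$ and the natural identification $K_0(C(Y))\cong C(Y,\Z)$ for $Y$ compact totally disconnected.

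First, for $F$ finite abelian, Fourier transform gives a $*$-isomorphism $\C F \to C(\hat F)$ sending $g \mapsto (\chi\mapsto\chi(g))$. Under this, the minimal projection $p_\chi = \frac{1}{|F|}\sum_{g\in F}\overline{\chi(g)}g$ becomes the indicator of $\{\chi\}\subset\hat F$; in particular $p_F$ goes to the indicator of the trivial character, so the pointings of $\mathrm{Min}(F)$ and $\hat F$ are compatible. Passing to the infinite direct sum, the Fourier transform extends to $C^*B \cong C(\hat B)$ with $\hat B=\hat F^\Z$, and for any finite $S\subset\Z$, the subalgebra $\C(\oplus_{k\in S}F)\subset C^*B$ corresponds to $C(\hat F^S)\subset C(\hat B)$ pulled back along the restriction map $\hat B\to\hat F^S$.

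Second, for $\varepsilon\in\hat F^{(\Z)}$ with support $S$, the tensor product of minimal projections $\otimes_{k\in S}p_{\varepsilon_k}$ in $\C(\oplus_{k\in S}F)$ is Fourier-transformed to the indicator of the singleton $\{(\varepsilon_k)_{k\in S}\}\subset\hat F^S$; pulled back to $\hat B$ this is precisely the characteristic function $\chi_\varepsilon$ of the cylinder set $\{x\in\hat B:x_k=\varepsilon_k\text{ for all }k\in\mathrm{supp}\,\varepsilon\}$. Third, since $\hat B$ is compact totally disconnected, $K_0(C(\hat B))$ is naturally identified with $C(\hat B,\Z)$ by sending the class of a projection to its characteristic function (viewed as an integer-valued continuous function). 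Combining these two identifications with the isomorphism of Corollary \ref{K0locfin}(2), $\Z((\mathrm{Min}(F))^{(\Z)})\cong K_0(C^*B)$, yields exactly the map $\varepsilon\mapsto\chi_\varepsilon$, that is, $\beta$.

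Finally, shift-equivariance is immediate: the shift automorphism $\alpha\in\mathrm{Aut}(B)$ induces dually the shift on $\hat B=\hat F^\Z$, which in turn intertwines the shift on basis elements $\varepsilon\in\hat F^{(\Z)}$ with the pullback action on cylinder sets, giving $\chi_{\alpha(\varepsilon)}=\chi_\varepsilon\circ\alpha^{-1}$. There is essentially no obstacle; the only point requiring care is to verify that the tensor-product projection on the group-ring side becomes, after Fourier transform and pulling back from $\hat F^S$ to $\hat B$, the indicator of the correct cylinder set, which is a direct unwinding of definitions.
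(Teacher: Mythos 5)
Your proposal is correct and is exactly the argument the paper intends: the paper states the proposition as ``a re-writing of Corollary \ref{K0locfin}'' (with no further proof), and you have simply made explicit the two standard identifications involved — the Fourier transform $C^*B\cong C(\hat{B})$ matching minimal projections with indicators of singletons (hence tensor products with indicators of cylinder sets), and $K_0(C(\hat{B}))\cong C(\hat{B},\Z)$. The base-point compatibility and shift-equivariance checks you include are the right routine verifications.
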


Observe that the group structure of $\hat{F}$ is irrelevant in Proposition \ref{C(Y,Z)free}. We only use the fact that it is a finite pointed space.

Denoting again by $\alpha$ the shift on $C(\hat{B},\Z)$, we denote by $B^1(\hat{B},\alpha)=\{f-f\circ\alpha: f\in C(\hat{B},\Z)\}$ the space of 1-coboundaries, and by $H^1(\hat{B},\alpha)=C(\hat{B},\alpha)/B^1(\hat{B},\alpha)$ the corresponding 1-cohomology group. Set $L=F\wr\Z$. Using the identification between $K_0(C^*L)$ and $H^1(\hat{B},\alpha)$ given by Theorem \ref{K-theory for $C^*L$} and its proof, we see that $H^1(\hat{F}^Z,\alpha)$ is free abelian on countably many generators. This result was known (see \cite{PT82}, Theorem 19 in Chapter 4), but our approach allows us to give an explicit basis, as we now explain\footnote{The original argument consists in expressing $H^1(\hat{B},\alpha)$ as a co-limit of free abelian groups of finite rank, such that the consecutive quotients are torsion-free.}.

For  $A\subset \Z$, we say that a function $f\in C(\hat{B},\Z)$ {\it only depends only the coordinates in $A$} if it factors through the restriction map $\hat{B}\rightarrow \hat{F}^{A}$. Define $$\tilde{H}(\hat{B}, \Z) =
  \{f\in C(\hat{B}, \Z) \,|\,\, f \,\,\text{only depends on the coordinates in} \,\,\N \cup \{0\}$$
   $$\mbox{and } \,\,f(x)=0 \,\, \,\,\mbox{if}\,\, x_0=1\}.$$
Recall that, for $g,h\in\hat{F}\backslash\{1\}$ and $\varepsilon\in\hat{F}^n$, we defined elements $g$ and $g\varepsilon h$ in $\hat{F}^{(\Z)}$, before Lemma \ref{repres}:
$$g(k)=\left\{\begin{array}{ccc}g & if & k=0 \\1 & if & k\neq 0\end{array}\right.$$
$$(g\varepsilon h)(k)=\left\{\begin{array}{ccc}g & if & k=0 \\\varepsilon_k & if & k=1,...,n \\h & if & k=n+1 \\1 &  & otherwise\end{array}\right.$$
The set of all those elements of $\hat{F}^{(\Z)}$ is denoted by $S$.

\begin{Lem}{\label{generatorHtilde}}
	 $ \tilde{H}(\hat{B}, \Z)$ is freely generated by the set
	 {$\mathcal{B} = \{\chi_g, \chi_{g\varepsilon h} \,|\,\, \varepsilon  \in \hat{F}^n, n \in \mathbb N \,\, \text{and} \,\,g, h\neq 1\}$.}
\end{Lem}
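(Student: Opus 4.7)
The plan is to first reparameterize $\mathcal{B}$ combinatorially, and then establish linear independence and generation separately. To reparameterize, observe that $\mathcal{B}$ is in natural bijection with the subset
\[
\mathcal{E} \;=\; \{\varepsilon \in \hat F^{(\Z)} : \operatorname{supp}\varepsilon \subset \N \cup \{0\},\ \varepsilon_0 \neq 1\}
\]
of $\hat F^{(\Z)}$: namely, $\chi_g \in \mathcal{B}$ corresponds to the $\varepsilon$ supported only at $0$ with value $g$, while $\chi_{g \varepsilon' h} \in \mathcal{B}$ with $\varepsilon' \in \hat F^n$ corresponds to the $\varepsilon$ determined by $\varepsilon_0 = g$, $\varepsilon_{n+1} = h$, and $\varepsilon_k = \varepsilon'_k$ for $1 \le k \le n$. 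Under this bijection $\mathcal{B}$ becomes a subset of the $\Z$-basis $\{\chi_\varepsilon\}_{\varepsilon \in \hat F^{(\Z)}}$ of $C(\hat B, \Z)$ supplied by Proposition \ref{C(Y,Z)free}, hence is automatically $\Z$-linearly independent.

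For the generation part, take an arbitrary $f \in \tilde H(\hat B, \Z)$. By local constancy, $f$ factors through $\hat F^{\{0, 1, \ldots, N\}}$ for some $N$. Using the atomic basis $\psi_a = \prod_{k=0}^N [x_k = a_k]$ indexed by $a \in \hat F^{N+1}$, the hypothesis that $f$ vanishes on $\{x_0 = 1\}$ means $f$ is a $\Z$-linear combination of those $\psi_a$ with $a_0 \neq 1$. It therefore suffices to show every such $\psi_a$ lies in the $\Z$-span of $\mathcal{B}$.

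For this step I would expand each factor $[x_k = 1] = \chi_1 - \sum_{g \neq 1} \chi_{(k,g)}$ for the indices $k \in \{1, \ldots, N\}$ with $a_k = 1$, while keeping $[x_k = a_k] = \chi_{(k, a_k)}$ intact when $a_k \neq 1$; here $\chi_{(k,g)}$ abbreviates $\chi_\varepsilon$ with $\varepsilon$ supported at $k$ with value $g$. Multiplying out, and using $\chi_\varepsilon \chi_{\varepsilon'} = \chi_{\varepsilon \cup \varepsilon'}$ on disjoint supports, $\psi_a$ becomes a signed sum of $\chi_\varepsilon$'s whose supports lie in $\{0, 1, \ldots, N\} \subset \N \cup \{0\}$ and always contain $0$ with value $a_0 \neq 1$. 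Each such $\chi_\varepsilon$ therefore belongs to $\mathcal{E}$, hence to $\mathcal{B}$, as desired.

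The main obstacle is just keeping the inclusion--exclusion bookkeeping clean; the structural reason it works is that the factor at $k = 0$ contributes the single term $\chi_{(0, a_0)}$ rather than a signed sum, so every $\chi_\varepsilon$ in the final expansion automatically satisfies $\varepsilon_0 = a_0 \neq 1$. This is exactly the condition distinguishing elements of $\mathcal{B}$ inside the full basis of $C(\hat B, \Z)$ given by Proposition \ref{C(Y,Z)free}.
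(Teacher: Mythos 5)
Your proof is correct, and its overall architecture coincides with the paper's: linear independence is obtained in both cases by recognizing $\mathcal{B}$ as the image under $\beta$ of the subset $S\setminus\{1\}$ of the standard basis of $\Z(\hat{F}^{(\Z)})$, hence as a subset of the basis of $C(\hat{B},\Z)$ furnished by Proposition \ref{C(Y,Z)free}. Where you diverge is in the spanning step: the paper starts from the cylinder generators $\chi_{g\varepsilon}$ with $g\neq 1$ and runs an induction on word length, using the single relation $\chi_{g\varepsilon 1}=\chi_{g\varepsilon}-\sum_{k\neq 1}\chi_{g\varepsilon k}$ to peel off a trailing trivial character, whereas you decompose $f$ into atoms $\psi_a$ and perform a one-shot inclusion--exclusion at \emph{every} coordinate carrying the trivial character. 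Your version is a cleaner packaging of the same computation: it treats interior occurrences of $1$ and the trailing one uniformly, and it makes explicit the point the paper's induction leaves implicit, namely that $\chi_\eta$ imposes no condition at coordinates where $\eta_k=1$, which is exactly what identifies the resulting terms with elements of $\mathcal{B}$; the key structural observation in both arguments is that the factor at $k=0$ never gets expanded, so every term keeps $\varepsilon_0=a_0\neq 1$. The only cosmetic omission on your side is the one-line verification that $\mathcal{B}\subset\tilde{H}(\hat{B},\Z)$ (each $\chi_\eta$ with $\eta\in\mathcal{E}$ depends only on the coordinates in $\N\cup\{0\}$ and vanishes on $\{x_0=1\}$ since $\eta_0\neq 1$), which is needed for the statement ``freely generated by'' and which the paper records explicitly.
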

 \begin{proof}
 	Clearly $\mathcal B \subset\tilde{H}(X, \Z) $. By Proposition \ref{C(Y,Z)free}, the set $\mathcal{B}$ is the image under the map $\beta$ of the free set $S\backslash\{1\}$ in $\Z(\hat{F}^{(\Z)})$, so it is a free subset of $ \tilde{H}(\hat{B}, \Z)$, and it is enough to prove that $\mathcal{B}$ generates $ \tilde{H}(\hat{B}, \Z)$. We know that $\tilde{H}(\hat{B}, \Z)$ is generated by the set  $\{\chi_{g\varepsilon}\,\,|\,\, g\in \hat{F}\setminus \{1\}, \varepsilon \in F^n, n\in \N \}$.  We would like to show that we may reduce the number of generating elements by putting aside those clopen sets whose associated word $g\varepsilon h$ ends in $1$, by producing them from others. We show by induction on the length of word $\varepsilon h$ that all such $\chi_{g\varepsilon h}$'s are in the span of $\mathcal B$. Let $n=0$, then $\chi_g \in \mathcal B$ by definition. Assume the result is true for $n = m$. For the case $n= m+1$, if $h \neq 1$, then $\chi_{g\varepsilon h} \in \mathcal B$; if $h=1$ then write $\chi_{g\varepsilon 1}$ as the following combination
 	$$
 	\chi_{g\varepsilon e} = \chi_{g \varepsilon} - \sum_{k\neq 1}{\chi_{ g\varepsilon k}}
 	$$
 	Here  the word associated to $\chi_{g\varepsilon}$ has length $m$ so by induction hypothesis it is in $\mathcal B$, and the  words in the sum do not end in 1 hence they are in $\mathcal B$ as well.
 	 \end{proof}
	
	 Set then $H(\hat{B},\Z)=\tilde{H}(\hat{B},\Z)+\Z.1$. Next result identifies the cohomology group $H^1(\hat{B},\alpha)$.
	
	 \begin{Prop}\label{directsum} $H(\hat{B},\Z)$ is the free abelian group on $\mathcal{B}\cup\{1\}$, and $C(\hat{B},\Z)$ is the direct sum of $B^1(\hat{B},\Z)$ and $H(\hat{B},\Z)$.
	 \end{Prop}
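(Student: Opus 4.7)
The plan is to reduce everything to the algebraic splitting already obtained in Lemma \ref{repres} by transporting it through the shift-equivariant isomorphism $\beta$ of Proposition \ref{C(Y,Z)free}.

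First I would establish the freeness statement for $H(\hat{B},\Z)$. Since $\tilde{H}(\hat{B},\Z)$ is free on $\mathcal{B}$ by Lemma \ref{generatorHtilde}, it suffices to show that the sum $\tilde{H}(\hat{B},\Z)+\Z\cdot 1$ is direct, i.e.\ that no nonzero integer multiple of the constant function $1$ lies in $\tilde{H}(\hat{B},\Z)$. This is immediate: any element of $\tilde{H}(\hat{B},\Z)$ vanishes at every point $x\in\hat{B}$ with $x_0=1$, whereas $n\cdot 1$ takes the value $n$ at such points. Hence $H(\hat{B},\Z)=\tilde{H}(\hat{B},\Z)\oplus\Z\cdot 1$ is free on $\mathcal{B}\cup\{1\}$.

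Second, for the direct sum decomposition of $C(\hat{B},\Z)$, I would translate everything across the isomorphism $\beta\colon\Z(\hat{F}^{(\Z)})\to C(\hat{B},\Z)$ of Proposition \ref{C(Y,Z)free}. Since $\beta$ is shift-equivariant, it sends the subgroup $\langle m-\alpha(m):m\in\Z(\hat{F}^{(\Z)})\rangle$ onto $B^1(\hat{B},\alpha)$. Applying $\beta$ to the splitting of Lemma \ref{repres},
\[
\Z(\hat{F}^{(\Z)})=\langle m-\alpha(m)\rangle\oplus\Z S,
\]
yields
\[
C(\hat{B},\Z)=B^1(\hat{B},\alpha)\oplus\beta(\Z S).
\]
It remains to identify $\beta(\Z S)$ with $H(\hat{B},\Z)$. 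The set $S$ of representatives consists of the basepoint $1\in\hat{F}^{(\Z)}$, the elements $g$ with $g\in\hat{F}\setminus\{1\}$, and the elements $g\varepsilon h$ with $g,h\in\hat{F}\setminus\{1\}$. Under $\beta$, the basepoint maps to the constant function $1$, and the remaining elements of $S$ map precisely to the set $\mathcal{B}$ of Lemma \ref{generatorHtilde}. Therefore $\beta(\Z S)=\Z\cdot 1\oplus\Z\mathcal{B}=H(\hat{B},\Z)$, and the proposition follows.

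There is no real obstacle here beyond bookkeeping; the whole point is that the algebraic computation done once and for all in Lemma \ref{repres} transfers verbatim through $\beta$, and the hypothesis that $g,h\neq 1$ in the definition of $\mathcal{B}$ matches exactly the condition $g,h\neq o$ appearing in Lemma \ref{repres}. The only subtlety worth flagging is that $\chi_1$ (the constant function $1$) is not an element of $\mathcal{B}$ but is needed as a separate generator, which is why $H(\hat{B},\Z)$ rather than $\tilde{H}(\hat{B},\Z)$ is the natural complement of the coboundaries.
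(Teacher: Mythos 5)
Your proof is correct and follows essentially the same route as the paper: identify $\beta(S)$ with $\mathcal{B}\cup\{1\}$ and push the splitting of Lemma \ref{repres} through the shift-equivariant isomorphism $\beta$ of Proposition \ref{C(Y,Z)free}. The only difference is that you spell out explicitly why the constant function $1$ is independent of $\tilde{H}(\hat{B},\Z)$ (vanishing on $\{x_0=1\}$), a detail the paper leaves implicit; this is a welcome clarification, not a different argument.
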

	
	 \begin{proof} The first statement follows immediately from Lemma \ref{generatorHtilde}. For the second: we observe that, by Proposition \ref{C(Y,Z)free}, $\mathcal{B}\cup\{1\}=\beta(S)$. Since
	 $$\Z(\hat{F}^{(\Z)})= <m-\alpha(m):m\in\Z(\hat{F}^{(\Z)})>\oplus\; \Z(S)$$
	 by Lemma \ref{repres}, we have $C(\hat{B},\Z)=B^1(\hat{B},\alpha)\oplus H(\hat{B},\Z)$ using Lemma \ref{generatorHtilde}.
	 \end{proof}
	
	 We end with a characterization \emph{\`a la} Liv\u{s}ic of the coboundaries in $B^1(\hat{B},\alpha)$. It is worth noting that this result can be deduced from the more general Theorem 4 in \cite{Liv72}, and is also stated in Proposition 3.13.(b) of \cite{BH96}. We provide an almost entirely algebraic proof.
	
	 \begin{Thm}\label{Livsic} For $f\in C(\hat{B},\Z)$, the following are equivalent:
	 \begin{enumerate}
	 \item $f\in B^1(\hat{B},\alpha)$;
	 \item $\int_{\hat{B}} f\,d\mu=0$ for every $\alpha$-invariant probability measure $\mu$ on $\hat{B}$;
	 \item For every $n\geq 1$ and every $n$-periodic $x\in\hat{B}$,
	 $$\sum_{k=0}^n f(\alpha^k(x))=0$$
	 (i.e. $f$ sums to 0 on the orbit of every periodic point).
	 \end{enumerate}
	 \end{Thm}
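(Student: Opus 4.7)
My plan is to establish the chain $(1) \Rightarrow (2) \Rightarrow (3) \Rightarrow (1)$, of which only the last link is nontrivial. For $(1) \Rightarrow (2)$, any $\alpha$-invariant probability measure kills coboundaries by translation-invariance. For $(2) \Rightarrow (3)$, observe that if $x$ is $n$-periodic, then $\mu_x = \frac{1}{n}\sum_{k=0}^{n-1}\delta_{\alpha^k(x)}$ is an $\alpha$-invariant probability measure, and $\int f\,d\mu_x$ is (up to the prefactor $\frac{1}{n}$) the orbit sum in (3) --- read with summation index running over $k=0,\dots,n-1$, which is the range over which any coboundary telescopes to zero on an $n$-periodic orbit.

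For the main implication $(3) \Rightarrow (1)$, I would first apply Proposition \ref{directsum} to write $f = \partial g + h$ with $\partial g := g - g\circ\alpha$ and $h \in H(\hat B, \Z)$. Since $\partial g$ telescopes to zero on every periodic orbit, $h$ inherits condition (3). Because $h \in H(\hat B, \Z)$, it depends on at most finitely many coordinates, all contained in $\{0, 1, \dots, m-1\}$ for some $m \ge 1$; let $\tilde h : \hat F^m \to \Z$ be the associated window function, so $h(x) = \tilde h(x_0, \dots, x_{m-1})$. The task reduces to producing $\tilde g : \hat F^{m-1} \to \Z$ satisfying
\[
\tilde h(y_0, \dots, y_{m-1}) = \tilde g(y_0, \dots, y_{m-2}) - \tilde g(y_1, \dots, y_{m-1}),
\]
for then $g'(x) := \tilde g(x_0, \dots, x_{m-2})$ is continuous $\hat B \to \Z$ and satisfies $h = g' - g'\circ\alpha$, giving $f = \partial(g + g') \in B^1(\hat B, \alpha)$.

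The main obstacle is this combinatorial construction of $\tilde g$, which I would carry out via the de Bruijn graph $D_{m-1}$ over $\hat F$: its vertex set is $\hat F^{m-1}$, and each $(y_0, \dots, y_{m-1}) \in \hat F^m$ is the label of a directed edge from $(y_0, \dots, y_{m-2})$ to $(y_1, \dots, y_{m-1})$. Viewed as a $\Z$-valued $1$-cochain on $D_{m-1}$, $\tilde h$ integrates to $\sum_{k=0}^{N-1}\tilde h(x_k,\dots,x_{k+m-1})$ along the closed walk of length $N$ determined by the $N$-periodic sequence $x \in \hat F^{\Z/N\Z}$; these are precisely the orbit sums which condition (3) forces to vanish. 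Since $D_{m-1}$ is strongly connected (any two vertices are joined by a directed path of length $m-1$), I would then produce $\tilde g$ by the standard path-integral recipe: pick a basepoint $v_0$, set $\tilde g(v_0) = 0$, and for each vertex $v$ put $\tilde g(v) := -\sum_{e \in \gamma}\tilde h(e)$ for any directed path $\gamma$ from $v_0$ to $v$. Independence of the choice of $\gamma$ follows from strong connectedness: given two such paths, concatenating either with a return path from $v$ back to $v_0$ produces a directed cycle on which $\tilde h$ integrates to zero, so the two path-integrals agree. This yields $\tilde h = \partial \tilde g$ at the graph level and hence $h = \partial g'$ on $\hat B$, completing the proof.
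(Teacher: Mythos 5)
Your proposal is correct, but the key implication $(3)\Rightarrow(1)$ is proved by a genuinely different route than in the paper. Both arguments begin by splitting $f$ via Proposition \ref{directsum}, but from there they diverge: the paper shows that the component $h\in H(\hat{B},\Z)$ must vanish \emph{identically}, by exploiting the explicit basis $\mathcal{B}$ --- the shifts $h_k=h\circ\alpha^k$ have pairwise disjoint supports because the representatives in $S$ lie in distinct shift-orbits, so the orbit sum over an $n$-periodic point degenerates to a single term, forcing $h$ to vanish on the (dense) set of periodic points. You instead prove directly that any function depending on finitely many coordinates and summing to zero on periodic orbits is a coboundary, via the de Bruijn graph: orbit sums over periodic points are exactly integrals of the $1$-cochain $\tilde h$ over closed walks, and strong connectedness lets you integrate $\tilde h$ to a potential $\tilde g$ by the usual path-independence argument. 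Your route is the classical Liv\v{s}ic-type argument for subshifts of finite type; it is self-contained, uses nothing about the basis $\mathcal{B}$ beyond finite coordinate dependence, and in fact would work without Proposition \ref{directsum} at all (any $f\in C(\hat{B},\Z)$ depends on finitely many coordinates by compactness, and a preliminary telescoping replaces $f$ by $f\circ\alpha^M$ supported on nonnegative coordinates). The paper's route is shorter given the machinery already built, and yields the slightly sharper statement that the complement $H(\hat{B},\Z)$ meets the set of functions satisfying $(3)$ only in $0$, which is what makes the ``no infinitesimals'' remark immediate. One small point of care in your write-up: you should note explicitly (as you implicitly do) that the summation in $(3)$ is to be read over $k=0,\dots,n-1$, and that the degenerate case $m=1$ of your de Bruijn graph (one vertex, $|\hat F|$ loops) still works, since each loop is itself a closed walk.
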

	
\begin{proof} The implications $(1)\Rightarrow(2)\Rightarrow(3)$ are clear. To prove $(3)\Rightarrow(1)$, we take $f\in C(\hat{B},\Z)$ summing to zero on all periodic orbits, and must prove that $f$ is a coboundary. In view of Proposition \ref{directsum}, we may assume $f\in H(\hat{B},\Z)$ and should check that $f$ vanishes identically. For $k\in\Z$, set $f_k=:f\circ\alpha^k$. Write $f=\sum_{\varepsilon\in S} a_\varepsilon\chi_\varepsilon$ (with $a_\varepsilon\in\Z$, vanishing for all but finitely many). Then $f_k=\sum_{\varepsilon\in S} a_\varepsilon\chi_{\alpha^{-k}(\varepsilon)}=\sum_{\eta\in\alpha^{-k}(S)} a_{\alpha^k(\eta)}\chi_\eta$. By Lemma \ref{repres}, for $k\neq\ell$, the sets $\alpha^{-k}(S)$ and $\alpha^{-\ell}(S)$ are disjoint, and then by Proposition \ref{C(Y,Z)free} $f_k$ and $f_\ell$ have disjoint supports. Now let $x\in\hat{B}$ be $n$-periodic. By assumption, $\sum_{k=0}^{n-1}f_k(x)=0$, and by disjointness of the supports of the $f_k$'s we deduce $f_k(x)=0$ for $k=0,1,...,n-1$. In particular, $f=f_0$ vanishes on any periodic point in $\hat{B}$. Since periodic points are dense in $\hat{B}$, the function $f$ is identically zero.
\end{proof}

If $\beta$ is a homeomorphism of a Cantor set $X$, the {\it space of infinitesimals} of $H^1(X,\beta)$ is the set of elements of $H^1(X,\beta)$ annihilated by all $\beta$-invariants functionals on $C(X,\R)$. Theorem \ref{Livsic} can then be rephrased by saying that the group $H^1(\hat{B},\alpha)$ has no infinitesimals, and this is of course related to the fact that uniform probability measures on periodic orbits of $\hat{B}$ are exactly the ergodic probability measures of the full shift (see Theorem 2.3 in \cite{Po89}).

\section{Distinguishing $C^*L$ up to isomorphism}

In this section we write $L_F=F\wr\Z$ (rather than $L$) to emphasize the dependency on $F$.

It follows from our \emph{K}-theory computations (Theorem \ref{K-theory for $C^*L$}) that the $C^*$-algebras $C^*(L_F)$ cannot be distinguished by $K$-theory. This raises the natural question of distinguishing them up to $*$-isomorphism. Our results, although partial, give however a satisfactory answer when $F$ is abelian.

As we observed already, $C^*B=\otimes_{k\in\N}\C F$ is an AF-algebra. Its Bratteli diagram is a rooted tree, obtained by repeating at each vertex the diagram of the inclusion $\C\subset \C F$:

\bigskip
\begin{picture}(120,60)
\put(55,60){$\bullet$}
\put(70,60){$\C$}
\put(0,0){\mbox{$\C$}}
\put(40,0){\mbox{$M_{d_2}(\C)$}}
\put(90,0){\mbox{$\cdots$}}
\put(140,0){\mbox{$M_{d_r}(\C)$}}
\put(3,12){$\bullet$}
\put(55,12){$\bullet$}
\put(150,12){$\bullet$}
\put(57,13){\line(0,1){48}}
\put(5,13){\line(1,1){53}}
\put(59,62){\line(2,-1){95}}
\put(115,35){$d_r$}
\put(60,35){$d_2$}
\put(-10,35){$d_1=1$}
\end{picture}

\medskip
Here the bottom vertices are indexed by $\hat{F}=\{\pi_1=1_F,\pi_2,...,\pi_r\}$ and the number of edges between the top vertex and the bottom vertex $\pi_i\in\hat{F}$ is the dimension $d_i=\dim\pi_i$.

So the Bratteli diagram of $C^*B$ only depends on the group ring $\C F$, and then, if $F_1,F_2$ are finite groups with isomorphic group rings, then $C^*L_{F_1}\simeq C^*L_{F_2}$. The question now is if the converse will be also true, i.e. if the $C^*$-algebra of $L_F$ determines the group ring of $F$. We will prove that the answer is affirmative when one of the groups $F_1,F_2$ is abelian (recall that, for abelian groups: $\C F_1\simeq\C F_2$ if and only if $|F_1|=|F_2|$).

\begin{Lem}\label{resfin} The following are equivalent:
\begin{enumerate}
\item $F$ is abelian;
\item $C^*L_F$ is residually finite-dimensional.
\end{enumerate}
\end{Lem}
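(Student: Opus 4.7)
I would prove each implication separately; the nontrivial direction is $(2) \Rightarrow (1)$.

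\textbf{$(1) \Rightarrow (2)$:} Assuming $F$ is abelian, for each $n \geq 1$ the folding $B = \bigoplus_\Z F \to \bigoplus_{\Z/n\Z} F$, summing entries with congruent indices mod $n$, is a group homomorphism (using abelianness of $F$) equivariant for the shifts, and so combines with $\Z \twoheadrightarrow \Z/n\Z$ to yield a quotient $p_n : L_F \twoheadrightarrow L_{F,n} := F \wr (\Z/n\Z)$ onto a finite group, with $\bigcap_n \ker p_n = \{e\}$ (for a given nonzero element, take $n$ larger than both the diameter of the $B$-part's support and the absolute value of the $\Z$-part). Writing $\pi_n : C^*L_F \to \C L_{F,n}$ for the induced $*$-homomorphism, it suffices to show $\bigoplus_n \pi_n$ is isometric. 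For $x \in \C L_F$ with finite support and $\varepsilon > 0$, amenability of $L_F$ gives $\|x\|_{C^*L_F} = \|\lambda(x)\|_{B(\ell^2 L_F)}$; pick $\xi \in \ell^2 L_F$ of finite support and unit norm with $\|\lambda(x)\xi\| \geq \|\lambda(x)\| - \varepsilon$. For $n$ large enough that $p_n$ is injective on $\mathrm{supp}(\xi) \cup \mathrm{supp}(\lambda(x)\xi)$, the push-forward $\bar\xi \in \ell^2 L_{F,n}$ has unit norm and $\pi_n(x)\bar\xi$ coincides with the push-forward of $\lambda(x)\xi$, so $\|\pi_n(x)\| \geq \|\lambda(x)\| - \varepsilon$.

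\textbf{$(2) \Rightarrow (1)$:} Contrapositively, assume $F$ is non-abelian and pick $c \in [F,F]\setminus\{1\}$, viewed in the $0$-th coordinate copy $F_0$ of $F$ inside $B$. By amenability the canonical map $\C L_F \to C^*L_F$ is injective, so $c - 1 \neq 0$ in $C^*L_F$. The key claim is that every finite-dimensional unitary representation $\pi$ of $L_F$ has $\pi|_B$ factoring through $B \twoheadrightarrow B^{ab} = \bigoplus_\Z F^{ab}$: granted this, $\pi(c) = I$ for every such $\pi$, so $c - 1$ is annihilated by every finite-dimensional representation of $C^*L_F$, contradicting RFD. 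To establish the claim, decompose $\pi|_B = \bigoplus_{j \in J} \bigotimes_k \rho^{(j)}_k$ into finitely many irreducibles; finite-dimensional irreducibles of the direct-sum group $B = \bigoplus_k F_k$ have tensor-product form, with $\rho^{(j)}_k$ necessarily one-dimensional for cofinitely many $k$ (lest the dimension be infinite). Set $S_j := \{k \in \Z : \dim \rho^{(j)}_k > 1\}$, a finite subset of $\Z$. The shift $\alpha$ sends $F_k$ to $F_{k+1}$, so $\pi|_B \circ \alpha = \bigoplus_j \bigotimes_k \rho^{(j)}_{k+1}$, and $\pi|_B \circ \alpha \cong \pi|_B$ via conjugation by $\pi(t)$. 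Uniqueness of the irreducible decomposition yields a permutation $\tau$ of $J$ with $\rho^{(\tau(j))}_k \cong \rho^{(j)}_{k+1}$, whence $S_{\tau(j)} = S_j - 1$. Iterating by the order $m$ of $\tau$ gives $S_j = S_j - m$; since $S_j$ is finite and no nonempty finite subset of $\Z$ is invariant under a nontrivial translation, $S_j = \emptyset$, so every $\rho^{(j)}_k$ is one-dimensional.

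\textbf{Main obstacle.} The core difficulty is the shift-equivariance combinatorics in $(2) \Rightarrow (1)$: one must combine the tensor-product form of finite-dimensional irreducibles of the infinite direct sum $B$ with the rigidity of the shift on the factors to force abelianness of the image. The converse is a standard Følner-type approximation, with abelianness of $F$ entering only in the definition of the folding quotient $p_n$; this is consistent with Gruenberg's theorem stating that $F \wr \Z$ is residually finite precisely when $F$ is abelian.
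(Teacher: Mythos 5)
Your proof is correct, and the direction $(2)\Rightarrow(1)$ takes a genuinely different route from the paper's. The paper argues at the level of groups: RFD implies $L_F$ is maximally almost periodic, hence (being finitely generated, via the residual finiteness of finitely generated linear groups) residually finite, and then Gruenberg's theorem on residual finiteness of wreath products forces $F$ to be abelian. You instead work directly with finite-dimensional representations: the classification of finite-dimensional irreducibles of $B=\oplus_k F_k$ as tensor products with all but finitely many factors one-dimensional, combined with the fact that conjugation by $\pi(t)$ translates the finite ``non-abelian support'' $S_j$ of each irreducible component while permuting a finite index set $J$, forces every $S_j$ to be empty; hence $\pi|_B$ factors through $B^{ab}$ and kills $c-1$ for $c\in[F,F]\setminus\{1\}$, which is nonzero in $C^*L_F$ by faithfulness of the regular representation on $\C L_F$. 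This is self-contained (no appeal to Gruenberg or to linearity) and yields the stronger statement that every finite-dimensional unitary representation of $L_F$ restricted to $B$ factors through $\oplus_\Z F^{ab}$; what the paper's route buys is brevity and the additional group-theoretic fact that $L_F$ itself is residually finite exactly when $F$ is abelian. For $(1)\Rightarrow(2)$ both arguments use the same finite quotients $F\wr(\Z/n\Z)$; the paper then cites Bekka--Louvet (amenable plus residually finite implies RFD), whereas you reprove that implication in this case by the standard finite-support approximation in $\ell^2 L_F$ --- correct, just longer.
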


\begin{proof} $(1)\Rightarrow(2)$ Assume that $F$ is abelian. Then $L_F$ is residually finite (the quotient maps onto the finite groups $F\wr C_n$ ($n\geq 2$) do separate points), and the $C^*$-algebra of an amenable, residually finite group, is residually finite-dimensional (see Theorem 4.3 in \cite{BL00}).

$(2)\Rightarrow(1)$ 
If $C^*L_F$ is residually finite-dimensional, then $L_F$ is maximally almost periodic (i.e. finite-dimensional unitary representations separate points); but $L_F$ is also finitely generated. By Selberg's lemma (finitely generated linear groups are residually finite), we deduce that $L_F$ is residually finite. Now a result of Gruenberg (Theorem 3.2 in \cite{Gr57}) implies that $F$ is abelian.
\end{proof}


Let $A$ be a $C^*$-algebra. We denote by $A^{ab}$ the {\it abelianization} of $A$, i.e. the greatest abelian $C^*$-quotient of $A$; this is the quotient of $A$ by the closed 2-sided ideal generated by commutators in $A$. If $A=C^*G$ for some discrete group $G$, then $(C^*G)^{ab}=C^*(G^{ab})$ (where $G^{ab}$ is the abelianization of $G$): this follows easily from the universal property of $C^*G$.

\begin{Lem}\label{abelianize} Set $n=|F^{ab}|$. Then $(C^*(L_F))^{ab}\simeq \C^n\otimes C(S^1)$.
\end{Lem}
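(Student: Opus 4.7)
The plan is to compute the abelianization $L_F^{ab}$ directly, then invoke the general fact that $(C^*G)^{ab} \simeq C^*(G^{ab})$ for any discrete group $G$, and finally recognize the right-hand side as $\mathbb{C}^n \otimes C(S^1)$.

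\textbf{Step 1: Compute $L_F^{ab}$.} Writing $L_F = B \rtimes_\alpha \Z$ with $B = \oplus_\Z F$, I would first observe that $L_F^{ab} = B' \times \Z$, where $B'$ is the quotient of $B$ by the normal subgroup of $L_F$ generated by $[B,B]$ together with all elements of the form $b \cdot \alpha(b)^{-1}$ for $b \in B$. The first relation replaces $B$ by $B^{ab} = \oplus_\Z F^{ab}$, and the second relation quotients $\oplus_\Z F^{ab}$ by the $\Z$-action (passing to coinvariants). A system of representatives for the shift-orbits in $\oplus_\Z F^{ab}$ is given by the elements supported at the origin, so the coinvariant group is canonically isomorphic to $F^{ab}$ via the sum map $(a_k)_{k\in\Z} \mapsto \sum_k a_k$. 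Hence $L_F^{ab} \simeq F^{ab} \times \Z$.

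\textbf{Step 2: Pass to $C^*$-algebras.} As noted just before the lemma, the universal property of the full group $C^*$-algebra gives $(C^*G)^{ab} \simeq C^*(G^{ab})$ for any discrete group $G$: unital $*$-homomorphisms $C^*G \to A$ into a commutative $C^*$-algebra $A$ are in bijection with unitary representations of $G$ in $A$, which in turn correspond to group homomorphisms $G \to \mathcal{U}(A)$, and those factor through $G^{ab}$ since $\mathcal{U}(A)$ is abelian. Applying this to $G = L_F$ and using Step 1 gives $(C^*L_F)^{ab} \simeq C^*(F^{ab} \times \Z)$.

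\textbf{Step 3: Identify the target.} For a direct product of discrete amenable groups, the full group $C^*$-algebra is the tensor product: $C^*(F^{ab} \times \Z) \simeq C^*(F^{ab}) \otimes C^*(\Z)$. Since $F^{ab}$ is a finite abelian group of order $n$, Fourier theory gives $C^*(F^{ab}) \simeq \mathbb{C}^n$; and $C^*(\Z) \simeq C(S^1)$ by Pontryagin duality. Combining these isomorphisms yields $(C^*L_F)^{ab} \simeq \mathbb{C}^n \otimes C(S^1)$, as claimed.

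The only step requiring genuine care is the identification of the coinvariants in Step 1; everything else is essentially formal. I expect no serious obstacle, since the description of the coinvariants of a shift action on a direct sum has already been used (in a different guise) in Lemma \ref{inv/coinv} and Lemma \ref{repres} of the paper.
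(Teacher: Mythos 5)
Your proof is correct and takes essentially the same route as the paper, whose entire argument is the one-line chain $(C^*(L_F))^{ab}=C^*(L_F^{ab})=C^*(F^{ab}\times\Z)\simeq \C^n\otimes C(S^1)$, leaving the computation of $L_F^{ab}$ implicit; your Steps 2 and 3 are exactly the paper's two cited facts, and Step 1 supplies the missing verification that $L_F^{ab}\simeq F^{ab}\times\Z$. One small caution in Step 1: the elements supported at the origin are \emph{not} a system of representatives for the shift orbits on $\oplus_\Z F^{ab}$ (an element supported on $\{0,1\}$ lies in no such orbit); what you actually need, and what your sum-map argument correctly establishes, is that every class in the coinvariant group $(\oplus_\Z F^{ab})_\Z$ — the quotient by the subgroup generated by all $b-\alpha(b)$, which is larger than the identification of orbits — has a representative supported at the origin.
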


\begin{proof} By the previous observation we have
$$(C^*(L_F))^{ab}=C^*(L_F^{ab})=C^*(F^{ab}\times\Z)\simeq \C^n\otimes C(S^1).$$

\end{proof}

\begin{Thm}\label{uptoisom} Let $F_1,F_2$ be finite groups, with $F_1$ abelian. The following are equivalent:
\begin{enumerate}
\item $C^*(L_{F_1})\simeq C^*(L_{F_2})$;
\item $F_2$ is abelian and $|F_1|=|F_2|$.
\end{enumerate}
\end{Thm}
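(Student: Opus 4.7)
The proof naturally splits into the two implications.

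For $(2)\Rightarrow(1)$, the plan is as follows. Assume $F_1,F_2$ are abelian with $|F_1|=|F_2|=n$. Then $\mathbb{C}F_1$ and $\mathbb{C}F_2$ are both isomorphic to $\mathbb{C}^n$, so pick a $*$-isomorphism $\psi:\mathbb{C}F_1\to\mathbb{C}F_2$. Since $C^*B_{F_i}=\bigotimes_{k\in\Z}\mathbb{C}F_i$ (infinite tensor product, which here is simply the AF-completion), the copies of $\psi$ indexed by $k\in\Z$ assemble into a $*$-isomorphism $\Psi:C^*B_{F_1}\to C^*B_{F_2}$. Because $\Psi$ is built identically on every tensor factor, it intertwines the shift automorphisms $\alpha_1$ and $\alpha_2$. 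Passing to crossed products yields $C^*L_{F_1}=C^*B_{F_1}\rtimes_{\alpha_1}\Z\simeq C^*B_{F_2}\rtimes_{\alpha_2}\Z=C^*L_{F_2}$.

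For $(1)\Rightarrow(2)$, suppose $\Phi:C^*L_{F_1}\to C^*L_{F_2}$ is a $*$-isomorphism. The first step is to show $F_2$ is abelian. Since $F_1$ is abelian, Lemma \ref{resfin} gives that $C^*L_{F_1}$ is residually finite-dimensional; being residually finite-dimensional is preserved by $*$-isomorphism, hence $C^*L_{F_2}$ is residually finite-dimensional, and invoking Lemma \ref{resfin} again forces $F_2$ to be abelian.

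The second step is to extract the orders. Abelianization of $C^*$-algebras is functorial with respect to $*$-homomorphisms, so $\Phi$ descends to a $*$-isomorphism $(C^*L_{F_1})^{ab}\simeq(C^*L_{F_2})^{ab}$. By Lemma \ref{abelianize}, and using that $|F_i^{ab}|=|F_i|$ since each $F_i$ is now known to be abelian, we have
\[
(C^*L_{F_i})^{ab}\simeq\mathbb{C}^{|F_i|}\otimes C(S^1)\simeq C\bigl(\bigsqcup_{|F_i|}S^1\bigr).
\]
By Gelfand duality, the spectra (disjoint unions of $|F_1|$ and $|F_2|$ circles respectively) must be homeomorphic, and counting connected components gives $|F_1|=|F_2|$.

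The only delicate point is in $(2)\Rightarrow(1)$, namely verifying that the Bratteli-type isomorphism on $C^*B$ really does intertwine the two shifts; but since the isomorphism is built factorwise and symmetrically in the index $k\in\Z$, this is essentially automatic. The implication $(1)\Rightarrow(2)$ is a clean application of the two preceding lemmas, so no further obstacle is expected there.
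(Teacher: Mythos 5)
Your proposal is correct and follows essentially the same route as the paper: the forward implication via a factorwise isomorphism $\C F_1\simeq\C F_2$ (which the paper phrases through the Bratteli diagram of $C^*B$), and the converse via Lemma \ref{resfin} to force $F_2$ abelian and Lemma \ref{abelianize} to recover $|F^{ab}|=|F|$ from the abelianized $C^*$-algebra. The extra details you supply (shift-equivariance of the tensor-factor isomorphism, Gelfand duality to count components of the spectrum) are exactly the routine verifications the paper leaves implicit.
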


\begin{proof} $(2)\Rightarrow(1)$. If $F_2$ is abelian of the same order as $F_1$, then $\C F_1\simeq \C F_2$ and hence $C^*(L_{F_1})\simeq C^*(L_{F_2})$ by a previous observation.

$(1)\Rightarrow(2)$. Assume $C^*(L_{F_1})\simeq C^*(L_{F_2})$. By Lemma \ref{resfin}, $F_2$ is abelian, as $C^*(L_{F_1})$ is residually finite-dimensional. By Lemma \ref{abelianize}, the order of $F^{ab}$ is recovered from $C^*(L_F)$, so in our case $|F_1|=|(F_2)^{ab}|=|F_2|$.
\end{proof}

The {\it stable rank} of a $C^*$-algebra $A$ is a number $sr(A)\in\{\infty,1,2,3,...\}$ that was introduced by Rieffel \cite{Rie83}. An important feature is that, for $A$ unital, $sr(A)=1$ if and only if the invertible group $GL(A)$ is dense in $A$. We extend a result of Poon (\cite{Po89-b}, Corollary 2.6) from $F$ abelian to $F$ arbitrary finite group.

\begin{Prop}\label{stablerank} In the previous notation, $sr(C^*(L_F))=2$.
\end{Prop}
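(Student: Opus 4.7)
The plan is to prove the two-sided bound $sr(C^*L_F) \leq 2$ and $sr(C^*L_F) \geq 2$ separately, without using any structural result specific to the abelian case (which was what limited Poon's argument).

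For the upper bound, I would invoke Rieffel's theorem on the stable rank of crossed products by $\mathbb{Z}$: for every unital $C^*$-algebra $A$ with $\alpha\in\mathrm{Aut}(A)$, one has
\[
sr(A\rtimes_\alpha\mathbb{Z})\leq sr(A)+1,
\]
which appears as Theorem 7.1 of Rieffel's paper \cite{Rie83}. Since $B=\oplus_\Z F$ is locally finite, $C^*B$ is an AF-algebra, so $sr(C^*B)=1$. Applied to the decomposition $C^*L_F=C^*B\rtimes_\alpha\mathbb{Z}$, this gives $sr(C^*L_F)\leq 2$.

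For the lower bound, I would exploit the abelianization computed in Lemma \ref{abelianize}: setting $n=|F^{ab}|\geq 1$, one has $(C^*L_F)^{ab}\simeq\mathbb{C}^n\otimes C(S^1)$, which is a unital quotient of $C^*L_F$. A basic property of stable rank (Theorem 4.3 of \cite{Rie83}) is that $sr(A/I)\leq sr(A)$ for every closed two-sided ideal $I$. Since $C(S^1)$ has stable rank $2$ (one dimensional compact space) and the stable rank of a finite direct sum of $C^*$-algebras is the maximum of the stable ranks of the summands, $sr(\mathbb{C}^n\otimes C(S^1))=sr(C(S^1))=2$. Hence $sr(C^*L_F)\geq 2$.

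Combining the two inequalities yields $sr(C^*L_F)=2$. The only mildly delicate point is locating the two Rieffel results in the correct form; neither step involves computation specific to $F$, so the argument works uniformly for all non-trivial finite groups $F$ and genuinely extends Poon's abelian result.
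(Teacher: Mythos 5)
Your upper bound is exactly the paper's: Rieffel's Theorem 7.1 gives $sr(C^*B\rtimes_\alpha\Z)\leq sr(C^*B)+1=2$ since $C^*B$ is AF. That half is fine.

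The lower bound, however, contains a genuine error: $C(S^1)$ has stable rank $1$, not $2$. Rieffel's formula (Proposition 1.7 in \cite{Rie83}) is $sr(C(X))=\lfloor \dim X/2\rfloor+1$, which equals $1$ for $X=S^1$; concretely, a continuous map from a one-dimensional compact space into $\C\cong\R^2$ can be perturbed arbitrarily little so as to miss the origin, so invertibles are dense in $C(S^1)$. (You may be conflating stable rank with real rank: $RR(C(S^1))=1>0$, and the paper does use the abelianization in exactly that way to show $RR(C^*(L_F))>0$.) Consequently $sr(\C^n\otimes C(S^1))=1$, and the inequality $sr(A)\geq sr(A/I)$ only yields the vacuous bound $sr(C^*(L_F))\geq 1$. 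Your argument therefore does not rule out stable rank one.

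The paper gets the lower bound by a finer criterion of Poon (Lemma 2.2 in \cite{Po89-b}): $sr(A)=1$ if and only if $sr(I)=sr(A/I)=1$ \emph{and} $K_1(A)\to K_1(A/I)$ is onto. So it suffices to exhibit a quotient on which the induced $K_1$-map is not surjective. Note that even with this lemma your quotient is not enough in general: the map $K_1(C^*(L_F))=\Z\to K_1(\C^n\otimes C(S^1))=\Z^n$ fails to be onto only when $n=|F^{ab}|\geq 2$, and for perfect $F$ (e.g.\ $F=A_5$) one has $n=1$ and the map is an isomorphism. This is why the paper instead builds a quotient from the infinite tensor power $\rho=\otimes_{k\in\Z}(\pi,\xi)$ of a non-trivial irreducible representation $\pi$ of $F$, obtaining $(\rho(C^*B)\rtimes_\alpha\Z)\oplus C^*(\Z)$ as a quotient of $C^*(L_F)$ and checking via Pimsner--Voiculescu that $K_1(\rho(C^*B)\rtimes_\alpha\Z)\neq 0$, so that $K_1$ cannot surject. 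Some such $K$-theoretic input appears to be unavoidable; your proposal is missing it.
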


\begin{proof} Write $C^*(L_F)=C^*B\rtimes_\alpha\Z$. By Theorem 7.1 in \cite{Rie83}: $sr(C^*(L_F))\leq sr(C^*B)+1.$ Now $sr(C^*B)=1$ as $C^*B$ is an AF-algebra (\cite{Rie83}, Proposition 3.5). So
$$sr(C^*(L_F))\leq 2$$
and we are left to prove that $sr(C^*(L_F))>1$.

We will make use of a result of Poon (Lemma 2.2 in \cite{Po89-b}): if $I$ is a closed 2-sided ideal in a $C^*$-algebra $A$, we have $sr(A)=1$ if and only if $sr(I)=sr(A/I)=1$ and moreover the map $K_1(A)\ra K_1(A/I)$ is onto. So it is enough to find a quotient of $C^*(L_F)$ such that the map $K_1(C^*(L_F))\ra K_1(C^*(L_F)/I)$ is {\it not} onto.

Let $\pi$ be a non-trivial irreducible representation of $F$, say of dimension $d\geq 1$. Choose some unit vector $\xi$ in the Hilbert space $\mathcal{H}_{\pi}$. We then have an isometric embedding of finite-dimensional Hilbert spaces:
$$\bigotimes_{k=-n}^n \mathcal{H}_{\pi}\ra\bigotimes_{k=-n-1}^{n+1} \mathcal{H}_{\pi}:x\mapsto \xi\otimes x\otimes\xi.$$
The inductive limit of this system of Hilbert spaces is a Hilbert space $\bigotimes_{k\in\Z}(\mathcal{H}_{\pi},\xi)$.
We also have unitary representation $\bigotimes_{k=-n}^n \pi$ of $B_n=\oplus_{k=-n}^n F$ on $\bigotimes_{k=-n}^n \mathcal{H}_{\pi}$, that we embed as a sub-representation of $\bigotimes_{k=-n-1}^{n+1} \pi$ by $S \mapsto Id_{\mathcal{H}_\pi}\otimes S \otimes Id_{\mathcal{H}_\pi}$. Passing to the inductive limit, we get a unitary representation $\rho=:\bigotimes_{k\in\Z} (\pi,\xi)$ of $B$ on $\bigotimes_{k\in\Z}(\mathcal{H}_{\pi},\xi)$, that is irreducible by Section III.3.1.2 in \cite{Bla06}.

Set $\sigma=\rho\oplus 1_B$. At the $C^*$-level, $\sigma$ induces a short exact sequence
$$0\ra \ker\sigma\ra C^*B\ra \rho(C^*B)\oplus \C\ra 0$$
which is $\alpha$-equivariant. Taking crossed products by $\Z$, we get a short exact sequence
$$0\ra (\ker\sigma)\rtimes_\alpha\Z\ra C^*(L_F)\ra (\rho(C^*B)\rtimes_\alpha\Z)\oplus C^*(\Z)\ra 0.$$
The proof will completed by showing that the induced map $K_1(C^*(L_F))\ra K_1(\rho(C^*B)\rtimes_\alpha\Z)\oplus K_1( C^*(\Z))$ is not onto. Since $K_1(C^*(L_F))=\Z=K_1(C^*\Z)$, it is enough to see that $K_1(\rho(C^*B)\rtimes_\alpha\Z)\neq 0$. If $d=1$, then $\rho(C^*B)=\C$ and $\rho(C^*B)\rtimes_\alpha\Z=C^*(\Z)$, so the result is easy. If $d\geq 2$, then $\rho(C^*B)=\otimes_{k\in\Z} M_d(\C)$, i.e. $\rho(C^*B)$ is the UHF algebra $M_{d^\infty}(\C)$. We then consider the connecting map $\partial: K_1(\rho(C^*B)\rtimes_\alpha\Z)\ra K_0(\rho(C^*B))$ in the Pimsner-Voiculescu exact sequence. Let $[u]\in K_1(\rho(C^*B)\rtimes_\alpha\Z)$ be the unitary implementing the shift. By Lemma 2 in \cite{PV16}, we have $\partial[u]=-[1]$ and $[1]\neq 0$ as $M_{d^\infty}(\C)$ carries a unital trace. (Slightly more work gives $K_1(\rho(C^*B)\rtimes_\alpha\Z)=\Z[\frac{1}{d}]$.) This completes the proof.
\end{proof}

The {\it real rank} of a $C^*$-algebra $A$ was introduced by Brown and Pedersen \cite{BP91}; it is a number $RR(A)\in\{\infty, 0, 1, 2,\ldots\}$. A unital $C^*$-algebra satisfies $RR(A)=0$ if and only if invertible self-adjoint elements are dense among self-adjoint elements in $A$. The condition $RR(A)=0$ is clearly inherited by quotients, so a $C^*$-algebra with $RR(A)=0$ cannot map onto $C(S^1)$. This already shows $RR(C^*(L_F))>0$. On the other hand, for any $C^*$-algebra $A$, we have $RR(A)\leq 2sr(A)-1$ (Proposition 1.2 in \cite{BP91}). From Proposition \ref{stablerank} we deduce: $RR(C^*(L_F))\in \{1,2,3\}$. We therefore finish this paper by pointing to an interesting problem.
\vspace{.5cm}

 {\bf Question: what is the exact value of $RR(C^*(L_F))$?}

\vspace*{7mm}

{

{\small
\begin{tabular}{l} Ram\'on Flores
\\ Departmento de Geometr\'{i}a y Topolog\'{i}a, Universidad de Sevilla
\\ e-mail: \url{ramonjflores@us.es}
\\[2mm]
Sanaz Pooya
\\ Institut de Math\'{e}matiques, Universit\'{e} de Neuch\^{a}tel
\\ e-mail: \url{sanaz.pooya@unine.ch}
\\[2mm]
Alain Valette
\\ Institut de Math\'{e}matiques, Universit\'{e} de Neuch\^{a}tel
\\ e-mail: \url{alain.valette@unine.ch}
\end{tabular}}

\end{document}